\newcommand{\bed}{\begin{displaymath}}
\newcommand{\eed}{\end{displaymath}}
\newcommand{\bea}{\bed\begin{array}{rl}}
\newcommand{\eea}{\end{array}\eed}
\newcommand{\barray}{\begin{array}{ll}}
\newcommand{\earray}{\end{array}}
\newtheorem{theorem}{Theorem}[section]
\newtheorem{lemma}[theorem]{Lemma}
\newtheorem{proposition}[theorem]{Proposition}
\newtheorem{definition}{Definition}[section]
\newtheorem{rem}[theorem]{Remark}
\begin{document}

\title{S-asymptotically $\omega$-periodic solutions in distribution for a class of  stochastic fractional functional differential equations}
\author{Shufen Zhao$^{a,b}$
,~~Minghui Song$^{a}$\thanks{Corresponding author. songmh@hit.edu.cn. This work is supported by the NSF of P.R. China (No.11671113)}
\\$a$ Department of Mathematics, Harbin Institute of Technology, Harbin 150001, PR China\\
$b$ Department of  Mathematics, Zhaotong University, Zhaotong 657000,PR China}
\maketitle
\begin{abstract}
In this paper, we introduce the concepts of S-asymptotically $\omega$-periodic solutions in distribution for a class of stochastic fractional functional differential equations. The existence and uniqueness results for the S-asymptotically $\omega$-periodic solutions in distribution are obtained by means of the successive approximation and the Banach contraction mapping principle, respectively.

{\bf Keywords}
square-mean S-asymptotically $\omega$-periodic solution; S-asymptotically $\omega$-periodic solution in distribution ; stochastic fractional functional differential equations.

{\bf Mathematics Subject Classfication} 35B15; 34F05; 60H15.

\end{abstract}
\pagestyle{fancy}
\fancyhf{}
%\fancyhead[RE]{The performance of new grad tes}
\fancyhead[CO]{ S-asymptotically $\omega$-periodic solutions in distribution}
\fancyhead[LE,RO]{\thepage}
\section{Introduction}
%Almost automorphic functions are more general than almost periodic functions. They were
%introduced by Bochner \cite{1}. In \cite{2}, the author gave an important overview on almost automorphic functions and their
%applications in the context of differential equations. In the literature, there are many
%works devoted to the existence of almost automorphic solutions for determinate differential
%equations \cite{3,4,5,6,7,8} and the references therein. Fu and Liu \cite{9} introduced the concept of square-mean almost automorphy for stochastic processes and they studied square mean almost automorphic solutions to some linear and nonlinear stochastic differential equations. Cao et al. \cite{10} introduced the concept of $p$-mean almost automorphy for stochastic processes. The existence, uniqueness and exponential stability of quadratic-mean almost automorphic mild solutions for a class of stochastic functional differential equations were discussed.
For the potential applications in theory and applications, the properties about almost automorphic, asymptotically almost automorphic, almost periodic, asymptotically almost
periodic and S-asymptotically $\omega$-periodic solutions of various determinate differential systems have been investigated by many researchers
(see e.g. \cite{yoshizawa2012stability,n2013almost,corduneanu1989almost,hino2001almost,henriquez2015almost,diagana2007existence,diagana2009existence,diagana2016existence,henriquez2016pseudo} and references therein). In the mean while, the corresponding concepts of stochastic differential equations are
also interesting topics in mathematical
analysis, for example, Fu and Liu \cite{fu2010square} introduced the concept of square-mean almost automorphy for stochastic processes and they studied square mean almost automorphic solutions to some linear and nonlinear stochastic differential equations. Cao et al. \cite{cao} introduced the concept of $p$-mean almost automorphy for stochastic processes.
%The existence, uniqueness and exponential stability of quadratic-mean almost automorphic mild solutions for a class of stochastic functional differential equations were discussed.
Moreover, Fu in \cite{fu2012almost} introduced the concept of distributional almost automorphy for stochastic processes and obtained the existence and uniqueness of distributionally almost automorphic solutions to nonautonomous stochastic equations on
any real separable Hilbert space.
Liu and Sun \cite{sun} introduced the concepts of Poisson square-mean almost automorphy and almost automorphy in distribution and established the existence results of solutions that are almost automorphic in distribution for some semilinear stochastic differential equations with infinite dimensional L\'{e}vy noise and Li \cite{li2015weighted} considered the weighted pseudo almost automorphic solutions for nonautonomous
SPDEs driven by L\'{e}vy noise.

Henr\'{\i}quez et al. \cite{henriquez2008s} made an initial contribution to develop work in  \cite{zhongchao1987asymptotically,grimmer1969asymptotically,haiyin2006massera} and references therein to the theory of S-asymptotically $\omega$-periodic functions with values in a Banach space. Cuevas et al. \cite{cuevas2009s} considered the S-asymptotically $\omega$-periodic solution of the semilinear integro-differential equation of fractional order
\begin{eqnarray*}
% \nonumber to remove numbering (before each equation)
\left\{ \begin{array}{ll}
x'(t)=\int_{0}^{t}\frac{(t-s)^{\alpha-2}}{\Gamma(\alpha-1)}A x(s)\mathrm{d}s+f(t,x(t)),\\
x(0)=c_{0}.\end{array} \right.
\end{eqnarray*}
Moreover, in \cite{cuevas2010existence}, Cuevas et al. considered the S-asymptotically $\omega$-periodic solution of the following form,
\begin{eqnarray*}
% \nonumber to remove numbering (before each equation)
\left\{ \begin{array}{ll}
x'(t)=\int_{0}^{t}\frac{(t-s)^{\alpha-2}}{\Gamma(\alpha-1)}A x(s)\mathrm{d}s+f(t,x_{t}),\\
x(0)=\psi_{0}\in\mathcal{B},\end{array} \right.
\end{eqnarray*}
 where $\mathcal{B}$ is some abstract phase space.
Dimbour et al. \cite{dimbour2014s} considered the S-asymptotically $\omega$-periodic solutions of the differential equations with piecewise constant argument of the form
\begin{eqnarray*}
% \nonumber to remove numbering (before each equation)
\left\{ \begin{array}{ll}
x'(t)=Ax(t)+A_{0}x([t])+g(t,x(t)),\\
x(0)=c_{0}.\end{array} \right.
\end{eqnarray*}

Inspired by the work mentioned above, in this paper, we investigate the existence of the S-asymptotically $\omega$-periodic solutions in distribution in an abstract space for a class of stochastic fractional functional differential equations driven by L\'{e}vy noise of the form
\begin{eqnarray}\label{1}
 \left\{ \begin{array}{ll}
\mathrm{d}D(t, x_{t})=\int_{0}^{t}\frac{(t-s)^{\alpha-2}}{\Gamma(\alpha-1)}AD(s, x_{s})\mathrm{d}s\mathrm{d}t+f(t,x_{t})\mathrm{d}t\\
\qquad+g(t,x_{t})\mathrm{d}w(t) +\int_{|u|_{U}<1}F(t,x(t^{-}),u)\tilde{N}(\mathrm{d}t,\mathrm{d}u)\\
\qquad+\int_{|u|_{U}\geq1}G(t,x(t^{-}),u)N(\mathrm{d}t,\mathrm{d}u),
x_{0}=\phi\in C_{\mathcal{F}_{0}}^{b}([-\tau,0],\mathbb{X}),\end{array} \right.
\end{eqnarray}
where $1<\alpha<2,$ $D(t,\varphi)=\varphi(0)+h(t,\varphi),$ $A:D(A)\subset\mathbb{X}\rightarrow\mathbb{X}$ is a linear densely defined operator of sectorial type on a Banach space $\mathbb{X}, $ $x_{t}$ be defined by $x_{t}(\theta)=x(t+\theta)$ for each $\theta\in [-\tau, 0],$ and $f,~g,~F,~G$ are functions subject to some additional conditions. The convolution integral in (\ref{1}) is known as the Riemann-Liouville fractional integral \cite{cuesta2003numerical,dos2010asymptotically}. We introduce the concept of Poisson square-mean S-asymptotically $\omega$-periodic solution for (\ref{1}) in order to correspond to the effect of the L\'{e}vy noise. Furthermore, we make an initial consideration of the S-asymptotically $\omega$-periodic solution in distribution in an abstract space $\mathcal{C}$ for (\ref{1}).

The paper is organized as follows. In Section 2, we review and introduce some concepts about square mean S-asymptotically $\omega$-periodic solutions in distribution for (\ref{1})
and some of their basic properties. We show the existence and uniqueness of the mild solution and the S-asymptotically $\omega$-periodic solution in distribution to (\ref{1}) in Section 3 and Section 4, respectively.
\section{Preliminaries}

Let $(\Omega,\mathcal{F},P)$ be a complete probability space equipped with some filtration $\{\mathcal{F}_{t}\}_{t\geq0}$ which satisfy the usual conditions, $(H,|\cdot|)$ and $(U,|\cdot|)$ are real separable Hilbert spaces. $\mathcal{L}(U,H)$ denote the space of all bounded linear operators from $U$ to $H$ which with the usual operator norm $\|\cdot\|_{\mathcal{L}(U,H)}$ is a Banach space. $L^{2}(P,H)$ is the space of all $H$-valued random variables $X$ such that $\mathbb{E}|X|^{2}=\int_{\Omega}|X|^{2}\mathrm{d}P<\infty.$ For $X\in L^{2}(P,H),$ let $\|X\|:=(\int_{\Omega}|X|^{2}\mathrm{d}P)^{1/2},$ it is well known that $(L^{2}(P,H),\|\cdot\|)$ is a Hilbert space. We denote by $\mathcal{M}^{2}([0,T],H)$ for the collection of stochastic processes $x(t):[0,T]\rightarrow L^{2}(P,H)$ such that $\mathbb{E}\int_{0}^{T}|x(s)|\mathrm{d}s<\infty.$ Let $\tau>0$ and $\mathcal{C}:=\mathcal{C}([-\tau,0];H)$ denote the family of all right-continuous functions with left hand limits $\varphi$ from $[-\tau, 0]$ to $H.$ The space $\mathcal{C}$ is assumed to be equipped with the norm $\|\varphi\|_{\mathcal{C}}=\sup_{-\tau\leq\theta\leq 0}|\varphi(\theta)|_{H}.$ $\mathcal{C}_{\mathcal{F}_{0}}^{b}([-\tau,0];H)$ denotes the family of all almost surely bounded $\mathcal{F}_{0}$-measurable, $\mathcal{C}$-valued random variables. $\{x_{t}\}_{t\in \mathbf{R}}$ is regarded as a $\mathcal{C}$-valued stochastic process.
In the following discussion, we always consider the L\'{e}vy processes that are $U$-valued.

\subsection{L\'{e}vy process}
%\begin{definition}
%A $U$-valued stochastic process $L=(L(t),t\geq0)$ is called L\'{e}vy process if \begin{enumerate}
%                                                                                  \item [(1)]$L(0)=0$ almost surely;
%                                                                                  \item [(2)]$L$ has independent and stationary increments;
%                                                                                  \item [(3)]$L$ is stochastically continuous, i.e. for all $\epsilon>0$ and all $s>0,$ $\lim_{t\rightarrow s}\mathbf{P}(|L(t)-L(s)|_{U}>\epsilon)=0.$
%                                                                                \end{enumerate}
%\end{definition}
Let $L$ is a L\'{e}vy process on $U,$ we write $\Delta L(t)=L(t)-L(t^{-})$ for all $t\geq 0.$  We define a counting Poisson random measure $N$ on $(U-\{0\})$ through $$N(t,O)=\sharp\{0\leq s\leq t:~\Delta L(s)(\omega')\in O\}=\Sigma_{0\leq s\leq t}\chi_{O}(\Delta L(s)(\omega'))$$ for any Borel set $O$ in $(U-\{0\}),$ $\chi_{O}$ is the indicator function. We write $\nu(\cdot)=E(N(1,\cdot))$ and call it the intensity measure associated with $L$. We say that a Borel set $O$ in $(U-\{0\}),$ is bounded below if $0\in \bar{O}$ where $\bar{O}$ is closure of $O.$ If $O$ is bounded below, then $N(t,O)<\infty$ almost surely for all $t\geq 0$ and $(N(t,O),~t\geq0)$ is a Poisson process with intensity $\nu(O).$ So $N$ is called Poisson random measure. For each $t\geq 0$ and $O$ bounded below, the associated compensated Poisson random measure $\tilde{N}$ is defined by $\tilde{N}(t,O)=N(t,O)-t\nu(O)$ (see \cite{applebaum2009levy,holden1996stochastic}).
\begin{proposition}(see \cite{applebaum2009levy})
(L\'{e}vy-It\^{o} decomposition).
If $L$ is a $U$-valued L\'{e}vy process, then there exist $a\in U,$ a $U$-valued Wiener process $w$ with covariance operator $Q,$ the so-called $Q$-wiener process, and an independent Poisson random measure $N$ on $\mathbf{R}^{+}\times (U-\{0\})$ such that, for each $t\geq0,$
\begin{equation}\label{ito}
  L(t)=at+w(t)+\int_{|u|_{U}<1}u\tilde{N}(t,\mathrm{d}u)+\int_{|u|_{U}\geq1}uN(t,\mathrm{d}u),
\end{equation}
where the Poisson random measure $N$ has the intensity measure $\nu$ which satisfies
$\int_{U}(|y|_{U}^{2}\wedge 1)\nu(\mathrm{d}y)<\infty$ and $\tilde{N}$ is the compensated Poisson random measure of $N.$
\end{proposition}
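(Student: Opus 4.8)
The plan is to follow the classical route of It\^{o}, adapted to the separable Hilbert space setting. First I would isolate the jump structure of $L$. For a Borel set $O$ bounded below, the process $t\mapsto N(t,O)$ counts the jumps of $L$ landing in $O$; since such jumps are a.s.\ finite on compact time intervals and the increments of $L$ are stationary and independent, one checks that $N(t,O)$ is a Poisson process of rate $\nu(O):=\mathbb{E}[N(1,O)]$, and that $N(t,O_{1}),\dots,N(t,O_{k})$ are independent whenever $O_{1},\dots,O_{k}$ are disjoint; this is precisely the assertion that $N$ is a Poisson random measure on $\mathbf{R}^{+}\times (U-\{0\})$ with intensity $\nu$. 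The finiteness $\int_{U}(|y|_{U}^{2}\wedge 1)\,\nu(\mathrm{d}y)<\infty$ is then read off from the fact that $L$ has a.s.\ c\`{a}dl\`{a}g paths, so that $\sum_{0\le s\le t}|\Delta L(s)|_{U}^{2}\mathbf{1}_{|\Delta L(s)|_{U}<1}<\infty$ a.s.

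Next I would peel off the large jumps. Because $\{u:|u|_{U}\ge1\}$ is bounded below, $P(t):=\int_{|u|_{U}\ge1}u\,N(t,\mathrm{d}u)$ is a genuine finite sum of the large jumps of $L$, hence a compound Poisson process, and $L(t)-P(t)$ is again a L\'{e}vy process, now with jumps bounded by $1$; a truncation and exponential moment argument shows it has moments of all orders. For the small jumps, for $0<\varepsilon<1$ I would form the compensated integrals $\int_{\varepsilon\le|u|_{U}<1}u\,\tilde{N}(t,\mathrm{d}u)$, which are square-integrable martingales with $\mathbb{E}\big|\int_{\varepsilon\le|u|_{U}<1}u\,\tilde{N}(t,\mathrm{d}u)\big|^{2}=t\int_{\varepsilon\le|u|_{U}<1}|u|_{U}^{2}\,\nu(\mathrm{d}u)$; by the finiteness of $\int_{U}(|y|_{U}^{2}\wedge1)\,\nu(\mathrm{d}y)$ together with Doob's inequality these converge in $L^{2}(P,H)$, uniformly on compact time intervals, as $\varepsilon\to0$ to a c\`{a}dl\`{a}g $L^{2}$-martingale $M(t):=\int_{|u|_{U}<1}u\,\tilde{N}(t,\mathrm{d}u)$.

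It then remains to identify $W(t):=L(t)-P(t)-M(t)$. By construction $W$ is a L\'{e}vy process; moreover $M$ carries all the jumps of $L-P$, each of size $<1$, so $W$ has a.s.\ continuous paths. A continuous L\'{e}vy process on a separable Hilbert space is Gaussian: setting $a:=\mathbb{E}W(1)$ and $w(t):=W(t)-at$, the centered process $w$ has stationary independent increments, mean zero, and continuous paths, and a dyadic-subdivision central limit argument applied to $\langle w(t),x\rangle$ for $x\in U$ (the continuity giving the infinitesimal array condition) forces every increment to be Gaussian; continuity in probability then pins the covariance down to $\mathbb{E}\langle w(t),x\rangle\langle w(t),y\rangle=t\langle Qx,y\rangle$ for a trace-class $Q$, so $w$ is a $Q$-Wiener process. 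Finally, independence of $w$, $M$ and $P$ follows from the mutual independence of the Poisson random measure over disjoint annuli together with the orthogonality (vanishing predictable covariation) of the continuous martingale part and the jump part. I expect the delicate points to be the two limiting arguments: the $L^{2}$-convergence of the compensated small-jump integrals, and the proof that a path-continuous L\'{e}vy process is necessarily Gaussian, the latter being where the separable Hilbert space structure must be handled with care.
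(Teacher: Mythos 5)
This proposition is not proved in the paper at all: it is quoted verbatim from the cited reference (Applebaum's book, with the Hilbert-space version treated in the Albeverio--R\"{u}diger reference the paper also cites), so there is no ``paper proof'' to match against. Your outline is, in substance, exactly the standard argument of that source: extract the jump measure and show it is Poisson, peel off the large jumps as a compound Poisson process, compensate the small jumps and pass to the $L^{2}$ limit via Doob's inequality, and identify the continuous remainder as $at+w(t)$ with $w$ a $Q$-Wiener process, independence coming from disjointness of the annuli and orthogonality of the continuous and jump parts. So as a sketch it is the right proof, and the two genuinely delicate points you single out are indeed the right ones.

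One step, however, is stated in a way that would not survive being made rigorous: you claim that $\int_{U}(|y|_{U}^{2}\wedge 1)\,\nu(\mathrm{d}y)<\infty$ ``is read off'' from the c\`{a}dl\`{a}g property via $\sum_{0\le s\le t}|\Delta L(s)|_{U}^{2}\mathbf{1}_{\{|\Delta L(s)|_{U}<1\}}<\infty$ a.s. A c\`{a}dl\`{a}g path only has finitely many jumps of size $\ge\delta$ on compacts; it does not follow from path regularity alone that the sum of \emph{squared} small jumps converges, and even if that almost sure finiteness were granted, it would not yield finiteness of the intensity integral (an a.s.\ finite nonnegative random variable can have infinite expectation). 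In the standard proof this integrability is obtained the other way around: after removing the large jumps one has a L\'{e}vy process with jumps bounded by $1$, which has finite second (indeed all) moments, and the identity $\mathbb{E}\bigl|\int_{\varepsilon\le|u|_{U}<1}u\,\tilde{N}(t,\mathrm{d}u)\bigr|^{2}=t\int_{\varepsilon\le|u|_{U}<1}|u|_{U}^{2}\,\nu(\mathrm{d}u)$ together with a uniform-in-$\varepsilon$ variance bound forces $\sup_{0<\varepsilon<1}\int_{\varepsilon\le|u|_{U}<1}|u|_{U}^{2}\,\nu(\mathrm{d}u)<\infty$. You already invoke precisely this moment ingredient later in the sketch, so the fix is a reordering rather than a new idea, but as written that step is circular. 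The rest (the $L^{2}$ convergence of the compensated integrals, and the continuity-implies-Gaussian argument, where trace-class $Q$ comes for free because a Gaussian measure on a separable Hilbert space automatically has trace-class covariance) is sound in outline.
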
 The detail properties of L\'{e}vy process and $Q$-Wiener processes, we refer the readers to \cite{albeverio2002stochastic} and \cite{da2014stochastic}. Throughout the paper, we assume the covariance operator $Q$ of $w$ is of trace class, i.e. $TrQ<\infty$ and the L\'{e}vy process $L$ is defined on the filtered probability space $(\Omega,\mathcal{F},P,(\mathcal{F}_{t})_{t\in\mathbf{R}^{+}}).$ We also denote  $b:=\int_{|x|_{U}\geq 1}\nu(\mathrm{d}x)$ throughout the paper.
\subsection{Poisson square-mean S-asymptotically $\omega$-periodic solution of (\ref{1}) }
\begin{definition}(see \cite{sun})
A stochastic process $x:\mathbf{R}\rightarrow L^{2}(P,H)$ is said to be $L^{2}$-continuous if for any $s\in\mathbf{R},$ $\lim_{t\rightarrow s}\|x(t)-x(s)\|=0.$ It is $L^{2}$-bounded if $\|x\|_{\infty}=\sup_{t\in\mathbf{R}}\|x(t)\|<\infty.$
\end{definition}
Denote by $C_{b}(\mathbf{R};L^{2}(P,H))$ the Banach space of all $L^{2}$-bounded and $L^{2}$-continuous mapping from $\mathbf{R}$ to $L^{2}(P,H)$ endowed with the norm $\|\cdot\|_{\infty}.$
\begin{definition}
\begin{enumerate}
  \item [(1)] An $L^{2}$-continuous stochastic process $x:\mathbf{R}^{+} \rightarrow L^{2}(P,H)$ is said to be square-mean S-asymptotically $\omega$-periodic if there exists $\omega>0$ such that $\lim_{t\rightarrow\infty}\|x(t+\omega)-x(t)\|=0.$

      The collection of all S-asymptotically $\omega$-periodic stochastic processes $x:\mathbf{R}^{+}\rightarrow L^{2}(P,H)$ is denoted by $SAP_{\omega}(L^{2}(P,H)).$
  \item [(2)] A function $g:\mathbf{R}^{+} \times \mathcal{C}\rightarrow \mathcal{L}(U,L^{2}(P,H)),$ $(t,\varphi)\mapsto g(t,\varphi)$ is said to be square-mean S-asymptotically $\omega$-periodic in $t$ for each $\varphi\in \mathcal{C}$ if $g$ is continuous in the following sense $$\mathbb{E}\|(g(t,\phi)-g(t',\varphi))Q^{1/2}\|^{2}_{\mathcal{L}(U,L^{2}(P,H))}\rightarrow0~\text{as}~(t',\varphi)\rightarrow(t,\phi)$$
      and $$\lim_{t\rightarrow\infty}\mathbb{E}\|(g(t+\omega,\phi)-g(t,\phi))Q^{1/2}\|^{2}_{\mathcal{L}(U,L^{2}(P,H))}=0$$ for each $\phi\in \mathcal{C}.$
  \item [(3)] A function $F:\mathbf{R}^{+} \times \mathcal{C}\times U\rightarrow L^{2}(P,H),$ $(t,\phi,u)\mapsto F(t,\phi,u)$ with $ \int_{U}\|F(t,\phi,u)\|^{2}\nu(\mathrm{d}u)<\infty$ is said to be Poisson square-mean S-asymptotically $\omega$-periodic in $t$ for each $\phi\in\mathcal{C}$ if $F$ is continuous in the following sense $$\int_{U}\|F(t,\phi,u)-F(t',\varphi,u)\|^{2}\nu(\mathrm{d}u)\rightarrow 0~\text{as}~(t',\varphi)\rightarrow(t,\phi)$$ and that
       \begin{equation*}
      \lim_{t\rightarrow\infty}\int_{U}\|F(t+\omega,\phi,u)-F(t,\phi,u)\|^{2}\nu(\mathrm{d}u)=0
      \end{equation*}
      for each $\phi\in \mathcal{C}.$
\end{enumerate}
\end{definition}
\begin{rem}
Any square-mean S-asymptotically $\omega$-periodic process $x(t)$ is $L^{2}$-bounded and, by \cite{henriquez2008s}, $SAP_{\omega}(L^{2}(P,H))$ is a Banach space when it is equipped with the norm $$\|x\|_{\infty}:=\sup_{t\in\mathbf{R}^{+}}\|x(t)\|=\sup_{t\in\mathbf{R}^{+}}(\mathbb{E}|x(t)|^{2})^{\frac{1}{2}}.$$
\end{rem}
For the sequel, we introduce some definitions about square-mean S-asymptotically $\omega$-periodic functions with parameters.
\begin{definition}
\begin{enumerate}
  \item [(1)]A function $f:\mathbf{R}^{+}\times \mathcal{C}\rightarrow L^{2}(P,H)$ is said to be uniformly square-mean S-asymptotically $\omega$-periodic in $t$ on bounded sets if for every bounded set $K$ of $\mathcal{C}$, we have $\lim_{t\rightarrow\infty}\|f(t+\omega,\phi)-f(t,\phi)\|=0$ uniformly on $\phi\in \mathcal{C}.$
  \item [(2)] A function $g:\mathbf{R}^{+} \times \mathcal{C}\rightarrow \mathcal{L}(U,L^{2}(P,H)),$ is said to be uniformly square-mean S-asymptotically $\omega$-periodic on bounded sets if for every bounded set $K$ of $\mathcal{C}$, we have
      \begin{equation*}
        \lim_{t\rightarrow\infty}\mathbb{E}\|(g(t+\omega,\phi)-g(t,\phi))Q^{1/2}\|^{2}_{\mathcal{L}(U,L^{2}(P,H))}=0
      \end{equation*}
    uniformly on $\phi\in K.$
  \item [(3)] A function $F:\mathbf{R}^{+} \times \mathcal{C}\times U\rightarrow L^{2}(P,H)$ with $ \int_{U}\|F(t,\phi,u)\|^{2}\nu(\mathrm{d}u)<\infty$ is said to be uniformly Poisson square-mean S-asymptotically $\omega$-periodic in $t$ on bounded sets if for every bounded set $K$ of $\mathcal{C}$
      \begin{equation*}
        \lim_{t\rightarrow\infty}\int_{U}\|F(t+\omega,\phi,u)-F(t,\phi,u)\|^{2}\nu(\mathrm{d}u)=0
      \end{equation*}
     uniformly on $\phi\in K.$
\end{enumerate}
\end{definition}

\begin{lemma}\label{lem1}
Let $f:\mathbf{R}^{+}\times \mathcal{C}\rightarrow L^{2}(P,H),~(t,\phi)\mapsto f(t,\phi)$ be uniformly square-mean S-asymptotically $\omega$-periodic in $t$ on bounded sets of $ \mathcal{C}$ and assume that $f$ satisfies the Lipschitz condition in the sense $\|f(t,\phi)-f(t,\varphi)\|^{2}\leq L\|\phi-\varphi\|_{\mathcal{C}}^{2}$ for all $\phi,\varphi\in \mathcal{C}$ and $t\in \mathbf{R}, $ where $L$ is independent of $t.$ Then for any square-mean S-asymptotically $\omega$-periodic process $Y:\mathbf{R}\rightarrow L^{2}(P,H),$ the stochastic process $F:\mathbf{R}\rightarrow L^{2}(P,H)$ given by $F(t):=f(t,Y_{t})$ is square-mean S-asymptotically $\omega$-periodic.
\end{lemma}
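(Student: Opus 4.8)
The plan is to show that $F(t) = f(t, Y_t)$ is $L^2$-continuous, $L^2$-bounded, and satisfies $\lim_{t\to\infty}\|F(t+\omega)-F(t)\| = 0$. The continuity follows from the joint continuity of $f$ together with the Lipschitz estimate and the fact that $t \mapsto Y_t$ is an $L^2$-continuous $\mathcal{C}$-valued process (which in turn follows from the $L^2$-continuity of $Y$ on $\mathbf{R}$). Boundedness is immediate once the asymptotic periodicity is established, since an $L^2$-continuous process that is asymptotically $\omega$-periodic on $[0,\infty)$ is automatically bounded on compact sets and hence everywhere; alternatively one bounds $\|F(t)\| \le \|f(t,Y_t) - f(t,0)\| + \|f(t,0)\| \le \sqrt{L}\,\|Y_t\|_{\mathcal{C}} + \|f(t,0)\|$ and notes both terms are bounded, using that $Y$ is bounded and that $t\mapsto f(t,0)$ is asymptotically $\omega$-periodic hence bounded.

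For the main estimate, I would split
\begin{displaymath}
\|F(t+\omega) - F(t)\| \le \|f(t+\omega, Y_{t+\omega}) - f(t+\omega, Y_t)\| + \|f(t+\omega, Y_t) - f(t, Y_t)\|.
\end{displaymath}
The first term is controlled by the Lipschitz condition:
\begin{displaymath}
\|f(t+\omega, Y_{t+\omega}) - f(t+\omega, Y_t)\| \le \sqrt{L}\,\|Y_{t+\omega} - Y_t\|_{\mathcal{C}} = \sqrt{L}\,\sup_{-\tau\le\theta\le 0}\|Y(t+\omega+\theta) - Y(t+\theta)\|,
\end{displaymath}
which tends to $0$ as $t\to\infty$ because $Y \in SAP_\omega(L^2(P,H))$ — though here one must be slightly careful: pointwise convergence $\|Y(s+\omega)-Y(s)\|\to 0$ must be upgraded to uniform convergence over the shrinking-to-nothing window $\theta\in[-\tau,0]$, which I expect to be the main obstacle.

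To handle this obstacle, I would invoke (or reprove) the standard fact that for a square-mean S-asymptotically $\omega$-periodic process the convergence $\|Y(s+\omega)-Y(s)\|\to 0$ holds \emph{uniformly} on intervals of the form $[t-\tau, t]$ as $t\to\infty$; this is essentially the statement that the associated $\mathcal{C}$-valued process $s\mapsto Y_s$ is itself S-asymptotically $\omega$-periodic, and it follows from the uniform continuity of $Y$ on such windows together with a contradiction/subsequence argument (if uniform convergence failed one could extract a sequence $t_n\to\infty$ and $\theta_n\in[-\tau,0]$ with $\|Y(t_n+\omega+\theta_n)-Y(t_n+\theta_n)\|$ bounded away from $0$, contradicting pointwise convergence along $s_n = t_n + \theta_n$). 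For the second term, fix $\varepsilon>0$; since $Y$ is bounded, the set $K = \{Y_t : t\ge 0\}$ need not be bounded in $\mathcal{C}$ pointwise-but-the-process-values lie in a bounded set of $L^2(P,H)$ — more precisely one uses that $\{Y_t\}$ is a bounded subset of $\mathcal{C}$-valued random variables, and then the uniform square-mean S-asymptotic $\omega$-periodicity of $f$ on bounded sets gives $\|f(t+\omega, \phi) - f(t,\phi)\| < \varepsilon$ for all $t$ large and all $\phi$ in that bounded set, in particular for $\phi = Y_t$. Combining the two bounds yields $\limsup_{t\to\infty}\|F(t+\omega)-F(t)\| \le \varepsilon$ for every $\varepsilon>0$, hence the limit is $0$, completing the proof.
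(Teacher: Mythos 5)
Your argument is correct and is essentially the paper's own proof: both use that boundedness of the range of $Y$ makes $\{Y_t\}_{t\ge 0}$ a bounded subset of $\mathcal{C}$, then split $\|f(t+\omega,Y_{t+\omega})-f(t,Y_t)\|$ by the triangle inequality into one term controlled by the Lipschitz condition and one controlled by the uniform square-mean S-asymptotic $\omega$-periodicity of $f$ on that bounded set (you take the Lipschitz term at time $t+\omega$ and the periodicity term at $\phi=Y_t$, the paper the mirror image, which is immaterial). Your extra step upgrading $\|Y(s+\omega)-Y(s)\|\to 0$ to uniformity over the window $s\in[t-\tau,t]$ is sound and is more care than the paper shows (it simply asserts $\|Y_{t+\omega}-Y_t\|$ is small), and in fact it is immediate without any subsequence or uniform-continuity argument, since for $t\ge T_\epsilon+\tau$ the whole window lies in $[T_\epsilon,\infty)$; the only remaining subtlety, which you share with the paper, is that for a random segment $\|Y_{t+\omega}-Y_t\|_{\mathcal{C}}$ should be read as $\bigl(\mathbb{E}\sup_{\theta\in[-\tau,0]}|Y(t+\omega+\theta)-Y(t+\theta)|^{2}\bigr)^{1/2}$, which your quantity $\sup_{\theta\in[-\tau,0]}\|Y(t+\omega+\theta)-Y(t+\theta)\|$ does not dominate, so a fully rigorous version would need the hypotheses phrased for $\mathcal{C}$-valued random variables.
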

\begin{proof}
Since $Y(t)\in SAP_{\omega}( L^{2}(P,H))$, the range of $Y(t)$ is a bounded set in $ L^{2}(P,H)$ which means that $\{Y_{t}\}_{t\in\mathbf{R}^+}$ is also a bounded set in $\mathcal{C}.$ Then $\lim_{t\rightarrow\infty}\|F(t+\omega,Y_{t+\omega})-F(t,Y_{t+\omega})\|=0.$
For any $\epsilon>0,$ $\exists ~T(\epsilon),$ such that $\|F(t+\omega,Y_{t+\omega})-F(t,Y_{t+\omega})\|<\epsilon/2$ and $\|Y_{t+\omega}-Y_{t}\|<\frac{\epsilon}{2L}.$
We get
\begin{eqnarray*}
% \nonumber to remove numbering (before each equation)
  &&\|F(t+\omega)-F(t)\|\\&&\leq\|F(t+\omega,Y_{t+\omega})-F(t,Y_{t})\|\\
  &&=\|F(t+\omega,Y_{t+\omega})-F(t,Y_{t+\omega})\|+\|F(t+\omega,Y_{t+\omega})-F(t,Y_{t})\|\\
  &&\leq\epsilon,
\end{eqnarray*}
which completes the proof.
\end{proof}
\begin{lemma}\label{lem2}
Let $F:\mathbf{R}^{+} \times \mathcal{C}\times U\rightarrow  L^{2}(P,H)$ be uniformly Poisson square-mean S-asymptotically $\omega$-periodic in $t$ on bounded sets of $\mathcal{C}$ and $F$ satisfies the Lipschitz condition in the sense $$\int_{U}\|F(t,\phi,u)-F(t,\varphi,u)\|^{2}\nu(\mathrm{d}u)\leq L\|\phi-\varphi\|_{\mathcal{C}}^{2}$$ for all $\phi,\varphi\in \mathcal{C}$ and $t\in \mathbf{R}, $ where $L$ is independent of $t.$ Then for any square-mean S-asymptotically $\omega$-periodic process $Y(t):\mathbf{R}\rightarrow L^{2}(P,H),$ the stochastic process $\tilde{F}:\mathbf{R}\times U\rightarrow   L^{2}(P,H)$ given by $\tilde{F}(t,u):=F(t,Y_{t},u)$ is Poisson square-mean S-asymptotically $\omega$-periodic.
\end{lemma}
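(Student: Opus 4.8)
The plan is to repeat, essentially verbatim, the argument used for \lemref{lem1}, only carried out now in the Hilbert space $L^{2}(U,\nu;L^{2}(P,H))$ whose norm is precisely $\big(\int_{U}\|\cdot\|^{2}\,\nu(\mathrm{d}u)\big)^{1/2}$, the quantity in which Poisson square-mean S-asymptotic $\omega$-periodicity is measured. First I would record the standing facts. Since $Y\in SAP_{\omega}(L^{2}(P,H))$, the Remark gives $\sup_{t\geq0}\|Y(t)\|<\infty$, so the family of history segments $\{Y_{t}:t\geq0\}$ sits inside a bounded set $K\subset\mathcal{C}$. Then $\int_{U}\|\tilde{F}(t,u)\|^{2}\nu(\mathrm{d}u)<\infty$ for every $t$: fix a reference $\phi_{0}\in\mathcal{C}$, split $F(t,Y_{t},u)=\big(F(t,Y_{t},u)-F(t,\phi_{0},u)\big)+F(t,\phi_{0},u)$, and use $\|a+b\|^{2}\leq2\|a\|^{2}+2\|b\|^{2}$ under the integral together with the Lipschitz bound $\int_{U}\|F(t,Y_{t},u)-F(t,\phi_{0},u)\|^{2}\nu(\mathrm{d}u)\leq L\|Y_{t}-\phi_{0}\|_{\mathcal{C}}^{2}$ and the standing integrability of $F(t,\phi_{0},\cdot)$. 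The continuity requirement for $\tilde{F}$ then follows from the joint continuity of $F$ together with the continuity of $t\mapsto Y_{t}$ into $\mathcal{C}$.

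Turning to the asymptotic part, given $\varepsilon>0$ I would first invoke uniform Poisson square-mean S-asymptotic $\omega$-periodicity of $F$ on the bounded set $K$ to get $T_{1}>0$ with
\[
\int_{U}\|F(t+\omega,\phi,u)-F(t,\phi,u)\|^{2}\,\nu(\mathrm{d}u)<\frac{\varepsilon}{4}\qquad\text{for all }t\geq T_{1}\text{ and }\phi\in K,
\]
in particular for the (random) choice $\phi=Y_{t+\omega}\in K$. Since $Y$ is square-mean S-asymptotically $\omega$-periodic and $[-\tau,0]$ is compact, the shifted segments satisfy $\|Y_{t+\omega}-Y_{t}\|\to0$ as $t\to\infty$ in the norm appearing on the right of the Lipschitz estimate, so there is $T_{2}>0$ with $L\|Y_{t+\omega}-Y_{t}\|^{2}<\varepsilon/4$ for $t\geq T_{2}$, whence by the Lipschitz hypothesis applied with $\phi=Y_{t+\omega}$, $\varphi=Y_{t}$,
\[
\int_{U}\|F(t,Y_{t+\omega},u)-F(t,Y_{t},u)\|^{2}\,\nu(\mathrm{d}u)\leq L\|Y_{t+\omega}-Y_{t}\|^{2}<\frac{\varepsilon}{4}.
\]

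Combining the two estimates via $\|a+b\|^{2}\leq2\|a\|^{2}+2\|b\|^{2}$ under the integral sign, for every $t\geq\max\{T_{1},T_{2}\}$ we obtain
\begin{align*}
\int_{U}\|\tilde{F}(t+\omega,u)-\tilde{F}(t,u)\|^{2}\nu(\mathrm{d}u)
&\leq2\int_{U}\|F(t+\omega,Y_{t+\omega},u)-F(t,Y_{t+\omega},u)\|^{2}\nu(\mathrm{d}u)\\
&\quad+2\int_{U}\|F(t,Y_{t+\omega},u)-F(t,Y_{t},u)\|^{2}\nu(\mathrm{d}u)<\varepsilon,
\end{align*}
so that $\lim_{t\to\infty}\int_{U}\|\tilde{F}(t+\omega,u)-\tilde{F}(t,u)\|^{2}\nu(\mathrm{d}u)=0$; together with the standing facts this is exactly the claim that $\tilde{F}$ is Poisson square-mean S-asymptotically $\omega$-periodic. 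The step I expect to be the genuine obstacle --- already tacitly used in the proof of \lemref{lem1} --- is the passage from the pointwise convergence $\|Y(t+\omega)-Y(t)\|\to0$ in $L^{2}(P,H)$ to the convergence $\|Y_{t+\omega}-Y_{t}\|\to0$ of the $\mathcal{C}$-valued history segments in the norm that controls the Lipschitz constant (i.e.\ the uniformity of the shift convergence over $\theta\in[-\tau,0]$, and its exact placement relative to the expectation), along with the companion point that $\{Y_{t}\}_{t\geq0}$ is a bounded subset of $\mathcal{C}$; once these are in hand the rest is the verbatim analogue of \lemref{lem1} with $\|\cdot\|$ on $L^{2}(P,H)$ replaced by the norm of $L^{2}(U,\nu;L^{2}(P,H))$.
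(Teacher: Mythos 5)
Your proposal is correct and follows essentially the same route as the paper's proof: the same observation that $\{Y_{t}\}_{t\geq0}$ is bounded in $\mathcal{C}$, the same two-term decomposition into $[F(t+\omega,Y_{t+\omega},u)-F(t,Y_{t+\omega},u)]+[F(t,Y_{t+\omega},u)-F(t,Y_{t},u)]$, the first piece handled by uniform Poisson S-asymptotic $\omega$-periodicity on the bounded set and the second by the Lipschitz bound together with $\|Y_{t+\omega}-Y_{t}\|\to0$. The segment-convergence step you single out as the real obstacle is used just as tacitly in the paper's own argument (for both Lemma \ref{lem1} and Lemma \ref{lem2}), so flagging it is a fair refinement rather than a deviation.
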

\begin{proof}
Since $F$ is uniformly Poisson square-mean S-asymptotically $\omega$-periodic in $t$ on bounded sets of $\mathcal{C}$ and
$Y(t)\in SAP_{\omega}(L^{2}(P,H),$ the range $\mathcal{R}(Y)$ of $Y(t)$ is a bounded set in $L^{2}(P,H),$
namely $\{Y_{t}\}_{t\in\mathbf{R}}$ is bounded in $\mathcal{C},$ we get  $\lim_{t\rightarrow\infty}\int_{U}\|F(t+\omega,\phi,u)-F(t,\phi,u)\|^{2}\nu(\mathrm{d}u)=0$ uniformly for $\phi\in \{Y_{t}\}_{t\in\mathbf{R}^{+}}.$ For any $\epsilon>0,$ we can find $T(\epsilon)>0$ such that when $t\geq T(\epsilon),$ we have $\int_{U}\|F(t+\omega,\phi,u)-F(t,\phi,u)\|^{2}\nu(\mathrm{d}u)\leq \epsilon/4,~\text{for~any}~\phi\in \{Y_{t}\}_{t\in\mathbf{R}^{+}}$ and $\|Y_{t+\omega}-Y_{t}\|^{2}<\frac{\epsilon}{2L}.$
 Note that
\begin{eqnarray*}
&&\tilde{F}(t+\omega,u)-\tilde{F}(t,u)\\
&&=F(t+\omega,Y_{t+\omega},u)-F(t+\omega,Y_{t},u)+F(t+\omega,Y_{t},u)-F(t,Y_{t},u),
\end{eqnarray*}
so for the above $\epsilon,$ when $t\geq T(\epsilon),$ we have
\begin{eqnarray*}
&&\int_{U}\|\tilde{F}(t+\omega,u)-\tilde{F}(t,u)\|^{2}\nu(\mathrm{d}u)\\
&&\leq 2\int_{U}\|F(t+\omega,Y_{t+\omega},u)-F(t,Y_{t+\omega},u)\|^{2}\nu(\mathrm{d}u)\\
&&~~+2\int_{U}\|F(t,Y_{t+\omega},u)-F(t,Y_{t},u)\|^{2}\nu(\mathrm{d}u)\\
&&\leq \epsilon/2+2L\|Y_{t+\omega}-Y_{t}\|^{2}\\
&&\leq \epsilon.
\end{eqnarray*}
We deduce that
\begin{equation*}
  \lim_{t\rightarrow\infty }\int_{U}\|\tilde{F}(t+\omega,u)-\tilde{F}(t,u)\|^{2}\nu(\mathrm{d}u)=0,
\end{equation*}
which means that $\tilde{F}(t,u)$ is Poisson square-mean S-asymptotically $\omega$-periodic.
\end{proof}
Let $\mathcal{P}(\mathcal{C})$ be the space of Borel probability measures on $\mathcal{C}$, for $P_{1},~P_{2}\in \mathcal{P}(\mathcal{C})$, denote metric $d_{L}$ as follows
\begin{equation*}
  d_{\mathfrak{L}}(P_{1},P_{2})=\sup_{f\in L}|\int_{\mathcal{C}}f(\sigma)P_{1}(\mathrm{d}\sigma)-\int_{\mathcal{C}}f(\sigma)P_{2}(\mathrm{d}\sigma)|,
\end{equation*} where
\begin{equation*}
  \mathfrak{L}=\{f:\mathcal{C}\rightarrow \mathbf{R}:~|f(\phi)-f(\varphi)|\leq \|\phi-\varphi\|_{C}~\text{and }~|f(\cdot)|\leq 1\}.
\end{equation*}
\begin{definition}
A stochastic process $x_{t}:\mathbf{R}\rightarrow \mathcal{C}$ is said to be S-asymptotically $\omega$-periodic in distribution if
the law $\mu(t)$  of $x_{t}$ is a $\mathcal{P}(\mathcal{C})$-valued S-asymptotically $\omega$-periodic mapping, i.e.
there is a positive number $\omega$ such that
\begin{equation*}
  \lim_{t\rightarrow\infty}d_{\mathfrak{L}}(\mu(t+\omega),\mu(t))=0.
\end{equation*}
\end{definition}
\begin{lemma}\label{lem3}
Any square-mean S-asymptotically $\omega$-periodic solution of (\ref{1}) is necessarily S-asymptotically $\omega$-periodic in distribution.
\end{lemma}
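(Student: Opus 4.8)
The plan is to test the two laws against the Lipschitz class $\mathfrak{L}$ and to reduce the distributional assertion to the square-mean one via a Kantorovich--Rubinstein type estimate. Fix $f\in\mathfrak{L}$. Since, by assumption, each $x_{t}$ is a $\mathcal{C}$-valued random variable with law $\mu(t)$, the change-of-variables formula gives $\int_{\mathcal{C}}f(\sigma)\mu(t)(\mathrm{d}\sigma)=\mathbb{E}[f(x_{t})]$, and similarly with $t$ replaced by $t+\omega$. Using $|f|\le 1$ together with the $1$-Lipschitz property of $f$ with respect to $\|\cdot\|_{\mathcal{C}}$, one obtains
\begin{equation*}
\Big|\int_{\mathcal{C}}f\,\mathrm{d}\mu(t+\omega)-\int_{\mathcal{C}}f\,\mathrm{d}\mu(t)\Big|
\le\mathbb{E}\big|f(x_{t+\omega})-f(x_{t})\big|
\le\mathbb{E}\big[\min\{2,\ \|x_{t+\omega}-x_{t}\|_{\mathcal{C}}\}\big].
\end{equation*}

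Next I would take the supremum over $f\in\mathfrak{L}$ to get $d_{\mathfrak{L}}(\mu(t+\omega),\mu(t))\le\mathbb{E}\|x_{t+\omega}-x_{t}\|_{\mathcal{C}}$, and then apply Jensen's (equivalently Cauchy--Schwarz's) inequality to bound $\mathbb{E}\|x_{t+\omega}-x_{t}\|_{\mathcal{C}}\le\big(\mathbb{E}\|x_{t+\omega}-x_{t}\|_{\mathcal{C}}^{2}\big)^{1/2}$. Since $x$ is a square-mean S-asymptotically $\omega$-periodic solution of (\ref{1}), the quantity $\mathbb{E}\|x_{t+\omega}-x_{t}\|_{\mathcal{C}}^{2}=\mathbb{E}\sup_{-\tau\le\theta\le0}|x(t+\omega+\theta)-x(t+\theta)|^{2}$ tends to $0$ as $t\to\infty$; hence $d_{\mathfrak{L}}(\mu(t+\omega),\mu(t))\to0$, which is exactly S-asymptotic $\omega$-periodicity in distribution. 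Everything in these two steps is routine once the test-function reduction is set up.

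The one point that deserves care is the final convergence. If "square-mean S-asymptotically $\omega$-periodic solution'' is understood for the $\mathcal{C}$-valued segment process $\{x_{t}\}$ in the sense $\lim_{t\to\infty}\big(\mathbb{E}\|x_{t+\omega}-x_{t}\|_{\mathcal{C}}^{2}\big)^{1/2}=0$ — the natural convention here, consistent with the systematic use of $\|\cdot\|_{\mathcal{C}}$ and with the solution being constructed as a $\mathcal{C}$-valued process — then nothing further is needed. If instead only the pointwise-in-$H$ decay $\mathbb{E}|x(t+\omega)-x(t)|^{2}\to0$ is available, one must additionally interchange the supremum over the delay window $\theta\in[-\tau,0]$ with the expectation; I expect this to be the main obstacle, and I would handle it by exploiting the $L^{2}$-continuity of $x$ on the compact window (to reduce the supremum to a countable dense set of shifts) together with a uniform-in-$\theta$ decay estimate inherited from the mild representation of the solution, or, more directly, by verifying the $\mathcal{C}$-valued square-mean S-asymptotic $\omega$-periodicity already at the stage where the solution is produced, so that the present lemma becomes the clean two-paragraph argument above.
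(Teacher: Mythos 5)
Your argument is essentially the paper's own proof: test the two laws against $f\in\mathfrak{L}$, bound the difference by $\mathbb{E}\big(2\wedge\|x_{t+\omega}-x_{t}\|_{\mathcal{C}}\big)$, and conclude from the square-mean decay via Cauchy--Schwarz. If anything you are more careful than the paper, whose proof passes from $\lim_{t\to\infty}\|x(t+\omega)-x(t)\|=0$ directly to $\mathbb{E}\|x_{t+\omega}-x_{t}\|_{\mathcal{C}}^{2}\leq\epsilon^{2}$ without comment --- precisely the interchange of the supremum over the delay window with the expectation that you flag and propose to justify.
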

\begin{proof}
Let $x(t)\in SPA_{\omega}(L^{2}(P,H))$ be a solution of (\ref{1}), then there exists $\omega>0$ such that
\begin{equation}\label{re}
  \lim_{t\rightarrow\infty}\|x(t+\omega)-x(t)\|=0.
\end{equation}
We need to show that the law $\mu(t)$  of $x_{t}$ satisfies
 \begin{equation*}
  \lim_{t\rightarrow\infty}d_{\mathfrak{L}}(\mu(t+\omega),\mu(t))=0,
\end{equation*}
which is equivalent to show for any $\epsilon>0,$ there is a $T>0$ such that
\begin{equation*}
  \sup_{f\in \mathfrak{L}}|\int_{\mathcal{C}}f(\sigma)\mu(t+\omega)(\mathrm{d}\sigma)-\int_{\mathcal{C}}f(\sigma)\mu(t)(\mathrm{d}\sigma)|\leq\epsilon,~\forall t\geq T.
\end{equation*}
Since for any $f\in \mathcal{\mathfrak{L}},$
\begin{eqnarray*}
|\mathbb{E}(f(x_{t}^{x_{\omega}}))-\mathbb{E}(f(x_{t}^{\phi}))|\leq\mathbb{E}(2\wedge\|x_{t}^{x_{\omega}}-f(x_{t}^{\phi})\|_{\mathcal{C}}).
\end{eqnarray*}
From (\ref{re}), there is a $T_{\epsilon}>0$ satisfying $\mathbb{E}\|x_{t}^{x_{\omega}}-f(x_{t}^{\phi})\|_{\mathcal{C}}^{2}\leq \epsilon^{2}.$
For the arbitrary of $f\in\mathfrak{L},$ we get
\begin{equation*}
  \sup_{f\in \mathfrak{L}}|\int_{\mathcal{C}}f(\sigma)\mu(t+\omega)(\mathrm{d}\sigma)-\int_{\mathcal{C}}f(\sigma)\mu(t)(\mathrm{d}\sigma)|\leq\epsilon,~\forall t\geq T.
\end{equation*}
The proof is completed.
\end{proof}
\subsection{Sectorial operators}
We recall some definition about sectorial operators which have been studied well in the past decades, for details, see \cite{haase2006functional,lunardi2012analytic}.

\begin{definition}
Let $\mathbb{X}$ be an Banach space, $A:D(A)\subseteq \mathbb{X}\rightarrow \mathbb{X}$ is a close linear operator. $A$ is said to be a sectorial operator of type $\mu$ and angle $\theta$ if there exist $0<\theta<\pi/2,~M>0$ and $\mu\in\mathbf{R}$ such that the resolvent $\rho(A)$ of $A$ exists outside the sector $\mu+S_{\theta}=\{\mu+\lambda:\lambda\in\mathbf{C} ,|arg(-\lambda)|<\theta\}$ and $\|(\lambda-A)^{-1}\|\leq \frac{M}{|\lambda-\mu|}$ when $\lambda$ does not belong to $\mu+S_{\theta}.$
\end{definition}
\begin{definition}(see \cite{dos2010asymptotically})
Let $A$ be a closed and linear operator with domain $D(A)$ defined on a Banach space $\mathbb{X}.$ We call $A$ the generator of a solution operator if there exist $\mu\in \mathbf{R}$ and a strongly continuous function $S_{\alpha}:~\mathbf{R}^{+}\rightarrow\mathcal{L}(\mathbb{X},\mathbb{X})$ such that $\{\lambda^{\alpha}:~Re(\lambda)>\mu\}\subset\rho(A)$ and $\lambda^{\alpha-1}(\lambda^{\alpha}-A)^{-1}x=\int_{0}^{\infty}e^{-\lambda t}S_{\alpha}(t)\mathrm{d}t,$ $Re(\lambda)>\mu,$ $x\in \mathbb{X}.$ In this case, $S_{\alpha}(\cdot)$ is called the solution operator generated by $A.$
\end{definition}
If $A$ is sectorial of type $\mu$ with $1<\theta<\pi(1-\frac{\alpha}{2}),$ then $A$ is the generator of a solution operator given by $S_{\alpha}(t)=\frac{1}{2\pi i}\int_{\gamma }e^{\lambda t}\lambda ^{\alpha-1}(\lambda^{\alpha}-A)^{-1}\mathrm{d}\lambda,$ where $\gamma$ is a suitable path lying outside the sector $\mu+S_{\theta}$ \cite{bajlekova2001fractional}. Cuesta \cite{cuesta2007asymptotic} showed that if $A$ is a sectorial operator of type $\mu<0,$ for some $M>0$ and $0<\theta<\pi(1-\frac{\pi}{2}),$ there is $C>0$ such that
\begin{equation}\label{sem}
  \|S_{\alpha}(t)\|\leq\frac{CM}{1+|\mu|t^{\alpha}},~t\geq 0.
\end{equation}

\section{Existence of mild solution}
\begin{definition}\label{def1}
An $\mathcal{F}_{t}$-progressively measurable stochastic process $\{x(t)\}_{t\in\mathbf{R}}$ is called a mild solution of (\ref{1}) if it satisfies the corresponding stochastic integral equation
\begin{eqnarray}\label{milds}
\nonumber x(t)&=&S_{\alpha}(t)(\phi(0)+h(0,\phi))-h(t,x_{t})+\int_{0}^{t}S_{\alpha}(t-s)f(s,x_{s})\mathrm{d}s\\
&&+\int_{0}^{t}S_{\alpha}(t-s)g(s,x_{s})\mathrm{d}w(s)\\
\nonumber&&+\int_{0}^{t}\int_{|u|<1}S_{\alpha}(t-s)F(s,x(s^{-}),u)\tilde{N}(\mathrm{d}s,\mathrm{d}u)\\
\nonumber&&+\int_{0}^{t}\int_{|u|\geq1}S_{\alpha}(t-s)G(s,x(s^{-}),u)N(\mathrm{d}s,\mathrm{d}u),\\
\nonumber x_{0}&=&\phi(\cdot)\in C_{\mathcal{F}_{0}}^{b}([-\tau,0];H)
\end{eqnarray}
for all $t\geq 0.$
\end{definition}
In order to establish our main result, we impose the following conditions.
\begin{enumerate}
\item [(H1)] $A$ is a sectorial operator of type $\mu<0$ and angle $\theta$ with $0\leq \theta\leq \pi(1-\alpha/2).$
 \item [(H2)] $h(t,0)=0$ and for all $\varphi,\psi\in \mathcal{C}$ there exists a constant $k_{0}\in(0,1)$ such that $|h(t,\varphi)-h(t,\psi)|\leq k_{0}\|\varphi-\psi\|_{\mathcal{C}}.$
  \item [(H3)] $f:\mathbf{R}^{+}\times  \mathcal{C}\rightarrow L^{2}(P,H),$ $g:~\mathbf{R}^{+}\times  \mathcal{C}\rightarrow \mathcal{L}(U,L^{2}(P,H)),$ and $f(t,0)=0,$ $g(t,0)=0,$ $F:\mathbf{R}^{+}\times  \mathcal{C}\times U\rightarrow L^{2}(P,H),$ $G:\mathbf{R}^{+}\times  \mathcal{C}\times U\rightarrow L^{2}(P,H),$ $F(t,0,u)=0,$ $G(t,0,u)=0.$ For all $t\in \mathbf{R}^{+},$
  \begin{eqnarray*}
\|f(t,\phi)-f(t,\varphi)\|^{2} &\leq &L\|\phi-\varphi\|_{\mathcal{C}}^{2},\\
\mathbb{E}\|(g(t,\phi)-g(t,\varphi))Q^{1/2}\|^{2}_{\mathcal{L}(U,L^{2}(P,H))}&\leq& L\|\phi-\varphi\|_{ \mathcal{C}}^{2},\\
   \int_{|u|_{U}<1}\|F(t,\phi,u)-F(t,\varphi,u)\|^{2}\nu(\mathrm{d}u)&\leq& L\|\phi-\varphi\|_{ \mathcal{C}}^{2},\\
     \int_{|u|_{U}\geq1}\|G(t,\phi,u)-G(t,\varphi,u)\|^{2}\nu(\mathrm{d}u)&\leq& L\|\phi-\varphi\|_{ \mathcal{C}}^{2},
  \end{eqnarray*}
  for some constant $L>0$ independent of $t.$

   \end{enumerate}
\begin{theorem}
If (H1)-(H2) hold, then the Cauchy problem (\ref{1}) has a unique mild solution.
\end{theorem}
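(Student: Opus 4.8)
The plan is to construct the mild solution by Picard successive approximation, as announced in the introduction, using (H1)--(H2) together with the Lipschitz bounds (H3). Fix $T>0$, set $x^{0}(t)=\phi(t)$ for $t\in[-\tau,0]$ and $x^{0}(t)=S_{\alpha}(t)\big(\phi(0)+h(0,\phi)\big)$ for $t\in[0,T]$, and define recursively $x^{n+1}_{0}=\phi$ and, for $t\in[0,T]$,
\begin{eqnarray*}
x^{n+1}(t)&=&S_{\alpha}(t)\big(\phi(0)+h(0,\phi)\big)-h(t,x^{n}_{t})+\int_{0}^{t}S_{\alpha}(t-s)f(s,x^{n}_{s})\,\mathrm{d}s\\
&&+\int_{0}^{t}S_{\alpha}(t-s)g(s,x^{n}_{s})\,\mathrm{d}w(s)+\int_{0}^{t}\int_{|u|<1}S_{\alpha}(t-s)F(s,x^{n}(s^{-}),u)\,\tilde{N}(\mathrm{d}s,\mathrm{d}u)\\
&&+\int_{0}^{t}\int_{|u|\geq1}S_{\alpha}(t-s)G(s,x^{n}(s^{-}),u)\,N(\mathrm{d}s,\mathrm{d}u).
\end{eqnarray*}
The goal is to prove that $\{x^{n}\}$ converges in $C([0,T];L^{2}(P,H))$ to a process $x$ solving (\ref{milds}) and then to patch such solutions over $[0,T],[T,2T],\dots$.

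First I would verify that each $x^{n}$ is a well-defined, $\mathcal{F}_{t}$-progressively measurable element of $C([0,T];L^{2}(P,H))$ with $\sup_{n}\sup_{0\le t\le T}\mathbb{E}|x^{n}(t)|^{2}<\infty$. The drift and Wiener terms are controlled through the decay estimate $\|S_{\alpha}(t)\|\le CM/(1+|\mu|t^{\alpha})$ of (\ref{sem}), which, since $1<\alpha<2$, lies in $L^{1}(\mathbf{R}^{+})\cap L^{2}(\mathbf{R}^{+})$: the It\^{o} isometry handles the $w$-integral, and the isometry for compensated Poisson integrals the $\tilde N$-integral, using $\int_{|u|<1}\|F\|^{2}\nu(\mathrm{d}u)<\infty$. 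The last integral is supported on $\{|u|\ge1\}$, where $\nu$ has finite mass $b$, so $t\mapsto N(t,\{|u|\ge1\})$ is a Poisson process of rate $b$ and the integral is an almost surely finite sum; for the moment bounds I would use the L\'{e}vy--It\^{o} splitting $N=\tilde N+\mathrm{d}s\,\nu$ on this set, treating the compensated part by isometry and the bounded-variation part $\int_{0}^{t}\int_{|u|\ge1}S_{\alpha}(t-s)G(s,x^{n}(s^{-}),u)\,\nu(\mathrm{d}u)\,\mathrm{d}s$ by the Cauchy--Schwarz inequality against the finite measure $\nu|_{\{|u|\ge1\}}$.

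The heart of the argument is the contraction estimate. Set $w_{n}(t)=\mathbb{E}\sup_{-\tau\le r\le t}|x^{n+1}(r)-x^{n}(r)|^{2}$; since consecutive iterates agree with $\phi$ on $[-\tau,0]$ this equals $\mathbb{E}\sup_{0\le r\le t}|x^{n+1}(r)-x^{n}(r)|^{2}$. Decompose $x^{n+1}(r)-x^{n}(r)$ as the neutral increment $-(h(r,x^{n}_{r})-h(r,x^{n-1}_{r}))$ plus the sum of the four integral increments and use $|a+b|^{2}\le(1+\e)|a|^{2}+(1+\e^{-1})|b|^{2}$: by (H2), $\mathbb{E}\sup_{r\le t}|h(r,x^{n}_{r})-h(r,x^{n-1}_{r})|^{2}\le k_{0}^{2}\,\mathbb{E}\sup_{r\le t}\|x^{n}_{r}-x^{n-1}_{r}\|_{\mathcal{C}}^{2}\le k_{0}^{2}\,w_{n-1}(t)$, while the four Lipschitz bounds of (H3), the two isometries (again with the splitting of $N$), a maximal inequality for stochastic convolutions, and the $L^{1}\cap L^{2}$ bound on $S_{\alpha}$ bound the rest by $C_{1}\int_{0}^{t}w_{n-1}(s)\,\mathrm{d}s$. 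Choosing $\e>0$ small so that $q:=(1+\e)k_{0}^{2}<1$ (possible since $k_{0}<1$) we obtain
\begin{equation*}
w_{n}(t)\le q\,w_{n-1}(t)+C_{2}\int_{0}^{t}w_{n-1}(s)\,\mathrm{d}s,\qquad t\in[0,T].
\end{equation*}
Iterating, either with $T$ taken small enough that $q+C_{2}T<1$ or with the uniform norm replaced by the Bielecki weight $\|x\|_{\lambda}=\sup_{t\ge0}e^{-\lambda t}\|x(t)\|$ (under which the integral coefficient is $O(\lambda^{-1})$ up to a fixed factor $e^{\lambda\tau}$ from the delay, while the neutral coefficient stays $q<1$), one gets $\sum_{n}w_{n}(T)^{1/2}<\infty$; hence $\{x^{n}\}$ is Cauchy, its limit $x$ satisfies (\ref{milds}) after passing to the limit under the integral signs (using the Lipschitz bounds once more), and $x$ is $\mathcal{F}_{t}$-progressively measurable. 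For uniqueness, the same estimate applied to two mild solutions $x,y$ gives $(1-q)\,\mathbb{E}\sup_{0\le r\le t}|x(r)-y(r)|^{2}\le C_{2}\int_{0}^{t}\mathbb{E}\sup_{0\le r\le s}|x(r)-y(r)|^{2}\,\mathrm{d}s$, whence $x\equiv y$ by Gronwall's inequality.

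The step I expect to be the main obstacle is keeping the overall constant ($q+C_{2}T$, or its weighted analogue) strictly below $1$ while carrying the two isometries and the maximal inequality through all five terms at once. Two features of (\ref{1}) make this delicate: the neutral term $-h(t,x_{t})$ sits on the right-hand side outside any smoothing convolution, so only the restriction $k_{0}<1$ in (H2) can absorb it; and the large-jump integral $\int_{|u|\ge1}G\,N(\mathrm{d}s,\mathrm{d}u)$ is not a martingale and must first be rewritten via the L\'{e}vy--It\^{o} decomposition, the finiteness $\int_{U}(|y|_{U}^{2}\wedge1)\nu(\mathrm{d}y)<\infty$ and $b<\infty$ being precisely what make the resulting moment bounds finite.
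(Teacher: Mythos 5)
Your proposal is correct in substance and follows the same basic route as the paper: Picard successive approximations for the mild form (\ref{milds}), with the decay bound (\ref{sem}) giving the $L^{1}\cap L^{2}$ control of $S_{\alpha}$, the It\^{o} and compensated-Poisson isometries for the $w$- and $\tilde N$-integrals, and the splitting $N=\tilde N+\nu\,\mathrm{d}s$ on $\{|u|\ge 1\}$ (with $b<\infty$) for the large-jump term --- exactly the ingredients behind the paper's estimates $I_{1}$--$I_{5}$; note also that, like the paper, you actually need (H3) even though the statement only lists (H1)--(H2). Where you diverge is in the treatment of the neutral term and in how convergence is concluded. You iterate explicitly, evaluating $h$ at the previous iterate, and absorb it through $(1+\e)k_{0}^{2}<1$, closing with either a small time interval or a Bielecki weight; the paper instead keeps $h(t,x^{n}_{t})$ at the current iterate (so each approximation is defined only implicitly), divides through by $(1-k_{0})$-type factors, obtains the pure integral recursion leading to the factorial bound $c_{0}(c_{6}t)^{n}/n!$, and concludes by Borel--Cantelli with almost sure uniform convergence on $[0,T]$. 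Your explicit scheme is arguably cleaner (the paper never justifies solvability of its implicit iteration step), while the paper's factorial recursion avoids the extra $q\,w_{n-1}(t)$ term. One caution: of your two ways of removing the smallness restriction on $T$, prefer the weighted-norm (or a direct iteration of your recursion, which already forces $w_{n}(T)\to 0$ for any fixed $T$ since $q<1$); literal patching over $[T,2T],\dots$ is not straightforward here because the mild form is a convolution with the solution operator $S_{\alpha}$, which is not a semigroup, so one cannot simply restart at time $T$ with $x(T)$ as new data. Finally, your appeal to ``a maximal inequality for stochastic convolutions'' is the same delicate point the paper passes over silently (it applies the isometry under the supremum); naming it is fine, but be aware that standard martingale maximal inequalities do not apply verbatim to $\int_{0}^{t}S_{\alpha}(t-s)(\cdot)\,\mathrm{d}w(s)$, so this step needs the same additional care in your write-up as it would in the paper's.
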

\begin{proof} %For any $T,$ $0<T<\infty.$
Define $x_{0}^{0}=\phi$ and $x^{0}(t)=S_{\alpha}(t)(\phi(0)+f(0,\phi))$ for $t\geq0.$

Set $x^{n}_{0}=\phi,$ for $n=1,2,...,$ $\forall~ T\in (0,\infty),$ we define the sequence of successive approximations to (\ref{1}) as follows:
\begin{eqnarray}\label{p1}
\nonumber&& x^{n}(t)+h(t,x_{t}^{n})\\
&&=S_{\alpha}(t)(\phi(0)+h(0,\phi))+\int_{0}^{t}S_{\alpha}(t-s)f(s,x_{s}^{n-1}))\mathrm{d}s\\
\nonumber&&~+\int_{0}^{t}S_{\alpha}(t-s)g(s,x_{s}^{n-1})\mathrm{d}w(s)\\
\nonumber&&~+\int_{0}^{t}\int_{|u|<1}S_{\alpha}(t-s)F(s,x^{n-1}_{s^{-}},u)\tilde{N}(\mathrm{d}s,\mathrm{d}u)\\
\nonumber&&~+\int_{0}^{t}\int_{|u|\geq1}S_{\alpha}(t-s)G(s,x^{n-1}_{s^{-}},u)N(\mathrm{d}s,\mathrm{d}u),
\end{eqnarray}
for $t\in[0,T].$ Obviously, $x^{0}(\cdot)\in C_{b}(\mathbf{R}, L^{2}(P,H)),$ and $\|x^{0}(\cdot)\|^{2}_{\infty}\leq c'$ where $c'=2(CM)^{2}(1+k_{0}^{2})\|\phi\|_{\mathcal{C}}^{2}$ is a positive constant.
\begin{eqnarray*}
&&\mathbb{E}\sup_{0\leq s\leq t}|x^{n}(t)+h(t,x_{t}^{n})|^{2}\\
&&=5\mathbb{E}\sup_{0\leq s\leq t}|S_{\alpha}(t)(\phi(0)+h(0,\phi))|^{2}+5\mathbb{E}\sup_{0\leq s\leq t}|\int_{0}^{s}S_{\alpha}(s-r)f(r,x^{n-1}_{r})\mathrm{d}r|^{2}\\
&&~+5\mathbb{E}\sup_{0\leq s\leq t}|\int_{0}^{s}S_{\alpha}(s-r)g(r,x^{n-1}_{r})\mathrm{d}w(r)|^{2}\\
&&~+5\mathbb{E}\sup_{0\leq s\leq t}|\int_{0}^{s}\int_{|u|<1}S_{\alpha}(s-r)F(r,x^{n-1}_{r^{-}},u)\tilde{N}(\mathrm{d}r,\mathrm{d}u)|^{2}
\\&&~+5\mathbb{E}\sup_{0\leq s\leq t}|\int_{0}^{s}\int_{|u|\geq1}S_{\alpha}(s-r)G(r,x^{n-1}_{r^{-}},u)N(\mathrm{d}r,\mathrm{d}u)|^{2}\\
&&=\sum_{i=1}^{5}I_{i}
\end{eqnarray*}
Obviously, $I_{1}\leq 5c',$ and
\begin{eqnarray*}
I_{2}&=&5\mathbb{E}\sup_{0\leq s\leq t}|\int_{0}^{s}S_{\alpha}(s-r)f(r,x^{n-1}_{r})\mathrm{d}r|^{2}\\
&\leq&5\mathbb{E}\sup_{0\leq s\leq t}\int_{0}^{s}S_{\alpha}(s-r)\mathrm{d}r\int_{0}^{s}S_{\alpha}(s-r)|f(r,x_{r}^{n-1})|^{2}\mathrm{d}r\\
&\leq&5CM\frac{|\mu|^{-1/\alpha}\pi}{\alpha\sin(\pi/\alpha)}L\mathbb{ E}\int_{0}^{t}S_{\alpha}(t-r)\|x_{r}^{n-1}\|_{\mathcal{C}}^{2}\mathrm{d}r\\
&\leq&5(CM)^{2}\frac{|\mu|^{-1/\alpha}\pi}{\alpha\sin(\pi/\alpha)}L \mathbb{E}\int_{0}^{t}\|x_{r}^{n-1}\|_{\mathcal{C}}^{2}\mathrm{d}r,
\end{eqnarray*}
It follows from It\^{o}'s isometry that
\begin{eqnarray*}
I_{3}&=&5\mathbb{E}\sup_{0\leq s\leq t}|\int_{0}^{s}S_{\alpha}(s-r)g(r,x_{r}^{n-1})\mathrm{d}w(r)|\\
&\leq&5(CM)^{2}\mathbb{E}\int_{0}^{t}\|g(r,x^{n-1}_{r})Q^{1/2}\|^{2}_{\mathcal{L}(U,L^{2}(P,H))}\mathrm{d}r\\
&\leq&5(CM)^{2}L\mathbb{E}\int_{0}^{t}\|x_{r}^{n-1}\|_{\mathcal{C}}^{2}\mathrm{d}r
\end{eqnarray*}
By using the properties of integrals for poisson random measures, we get
\begin{eqnarray*}
I_{4}&=&\mathbb{E}\sup_{0\leq s\leq t}|\int_{0}^{s}\int_{|u|<1}S_{\alpha}(t-r)F(r,x^{n-1}_{r^{-}},u)\tilde{N}(\mathrm{d}r,\mathrm{d}u)|^{2}\\
&\leq&5(CM)^{2}L\mathbb{E}\int_{0}^{t}\|x_{r}^{n-1}\|_{\mathcal{C}}^{2}\mathrm{d}r
\end{eqnarray*}
\begin{eqnarray*}
I_{5}&=&5\mathbb{E}\sup_{0\leq s\leq t}|\int_{0}^{s}\int_{|u|\geq1}S_{\alpha}(s-r)G(r,x^{n-1}_{r^{-}},u)N(\mathrm{d}r,\mathrm{d}u)|^{2}\\
&\leq&10(CM)^{2}[\int_{0}^{t}\mathbb{E}\int_{|u\geq1|}(\frac{1}{1+|\mu|(t-s)^{\alpha}})^{2}|G(r,x^{n-1}_{r^{-}},u)|^{2}\nu(\mathrm{d}u)\mathrm{d}s\\
&&+\int_{0}^{t}\frac{1}{1+|\mu|(t-r)^{\alpha}}\int_{|u|\geq 1}\nu(\mathrm{d}u)\mathrm{d}s\mathbb{E}\int_{0}^{t}(\int_{|u|\geq1}|G(r,x^{n-1}_{r^{-}},u)|^{2}\nu(\mathrm{d}u))\mathrm{d}s]\\
&\leq& 10(CM)^{2}L(1+b\frac{|\mu|^{-1/\alpha}\pi}{\alpha\sin(\pi/\alpha)})\mathbb{E}\int_{0}^{t}\|x_{r}^{n-1}\|_{\mathcal{C}}^{2}\mathrm{d}r.
\end{eqnarray*}
Note that
\begin{eqnarray*}
|x^{n}(t)|^{2}&=&|x^{n}(t)+h(t,x^{n}_{t})-h(t,x^{n}_{t})|^{2}\\
&\leq&\frac{1}{1-k_{0}}(|x^{n}(t)+h(t,x^{n}_{t})|^{2}+k_{0}(1-k_{0})\|x_{t}^{n}\|^{2}_{\mathcal{C}}),
\end{eqnarray*}
by taking the expectation on both sides of the above inequality, we get
\begin{eqnarray*}
\mathbb{E}\sup_{0\leq s\leq t}|x^{n}(s)|^{2}\leq \frac{1}{1-k_{0}}\mathbb{E}\sup_{0\leq s\leq t}|x^{n}(s)+h(s,x^{n}_{s})|^{2}+k_{0}\mathbb{E}\sup_{0\leq s\leq t}\|x^{n}_{s}\|_{\mathcal{C}}^{2}.
\end{eqnarray*}
Combining the estimations for $I_{1}-I_{5},$ we get
\begin{eqnarray*}
&&\mathbb{E}\sup_{0\leq s\leq t}|x^{n}(s)|^{2}\\
&&\leq \frac{1}{1-k_{0}}5(CM)^{2}[(1+k_{0}^{2})\|\phi\|_{\mathcal{C}}^{2}]
+\frac{1}{1-k_{0}}5(CM)^{2}L\\&&~~\times\big(\frac{|\mu|^{-1/\alpha}\pi}{\alpha\sin(\pi/\alpha)}+2+2(1+b\frac{|\mu|^{-1/\alpha}\pi}{\alpha\sin(\pi/\alpha)}) \big)\mathbb{E}\int_{0}^{t}\|x_{r}^{n-1}\|_{\mathcal{C}}^{2}\mathrm{d}r\\
&&= c_{1}+c_{2} \mathbb{E}\sup_{0\leq s\leq t}\int_{0}^{s}\|x^{n-1}_{r}\|_{\mathcal{C}}^{2}\mathrm{d}r.
\end{eqnarray*}
Then for any arbitrary positive integer $\tilde{k},$ we have
\begin{eqnarray*}
&& \max_{1\leq n\leq \tilde{k}}\mathbb{E} \sup_{0\leq s\leq t}|x^{n}(s)|^{2}\\
&& \leq  c_{1}+c_{2}\int_{0}^{t}\|\phi\|^{2}_{\mathcal{C}}\mathrm{d}r+c_{2}\int_{0}^{t}\sup_{0\leq r\leq t} \max_{1\leq n\leq \tilde{k}}\mathbb{E}\|x^{n-1}(r)\|_{\mathcal{C}}^{2}\mathrm{d}r.
  \end{eqnarray*}
By the Gronwall inequality, we get
  $$\max_{1\leq n\leq \tilde{k}}\mathbb{E} \sup_{0\leq s\leq t} |x^{n}(s)|^{2}\leq (c_{1}+c_{2}\|\phi\|^{2}T)e^{c_{3}t}.$$
  Due to the arbitrary of $\tilde{k},$ we have
  \begin{equation}\label{squa}
    \mathbb{E}\sup_{0\leq s\leq T} |x^{n}(s)|^{2}\leq (c_{1}+c_{2}\|\phi\|_{\mathcal{C}}^{2}T)e^{c_{3}T}.
  \end{equation}
  So $x^{n}(t)\in \mathcal{M}^{2}([0,T],H).$
%Note that
%\begin{eqnarray}
%&&x^{n+1}(t)-x^{n}(t)\\\nonumber&&=-h(t,x^{n+1}_{t})+h(t,x^{n}_{t})+\int_{0}^{t}S_{\alpha}(t-s)[f(s,x_{s}^{n}))-f(s,x_{s}^{n-1}))]\mathrm{d}s\\
%\nonumber&&~+\int_{0}^{t}S_{\alpha}(t-s)[g(s,x_{s}^{n})-g(s,x_{s}^{n-1})]\mathrm{d}w(s)\\
%\nonumber&&~+\int_{0}^{t}\int_{|u|<1}S_{\alpha}(t-s)[F(s,x^{n}_{s^{-}},u)-F(s,x^{n-1}_{s^{-}},u)]\tilde{N}(\mathrm{d}s,\mathrm{d}u)\\
%\nonumber&&~+\int_{0}^{t}\int_{|u|\geq1}S_{\alpha}(t-s)[G(s,x^{n}_{s^{-}},u)-G(s,x^{n-1}_{s^{-}},u)]N(\mathrm{d}s,\mathrm{d}u).
%\end{eqnarray}
Obviously, we have
\begin{eqnarray*}
  &&\mathbb{E}(\sup_{0\leq s\leq t}|x^{n+1}(s)-x^{n}(s)|^{2})\\
  &&\leq\frac{1}{1-k_{0}} \mathbb{E}(\sup_{0\leq s\leq t}(|x^{n+1}(s)-x^{n}(s)+h(s,x^{n+1}_{s})-h(s,x^{n}_{s})|^{2}\\&&~+k_{0} \mathbb{E}(\sup_{0\leq s\leq t}|x^{n+1}(s)-x^{n}(s)|^{2}),
\end{eqnarray*}
namely,
\begin{eqnarray*}
  &&\mathbb{E}(\sup_{0\leq s\leq t}|x^{n+1}(s)-x^{n}(s)|^{2})\\&&\leq
  \frac{1}{(1-k_{0})^{2}} \mathbb{E}\sup_{0\leq s\leq t}|x^{n+1}(s)-x^{n}(s)+h(s,x^{n+1}_{s})-h(s,x^{n}_{s})|^{2},
\end{eqnarray*}
and
  \begin{eqnarray*}\label{fir1}
    &&\mathbb{E}\sup_{0\leq s\leq t}|[x^{n+1}(s)-x^{n}(s)]+[h(t,x^{n+1}_{s})-h(t,x^{n}_{s})]|^{2}\\
    &&\leq4\mathbb{E}\sup_{0\leq s\leq t}|\int_{0}^{s}S_{\alpha}(s-r)[f(r,x_{r}^{n}))-f(r,x_{r}^{n-1}))]\mathrm{d}r|^{2}\\
    &&~+4\mathbb{E}\sup_{0\leq s\leq t}|\int_{0}^{s}S_{\alpha}(s-r)[g(r,x_{r}^{n})-g(r,x_{r}^{n-1})]\mathrm{d}w(r)|^{2}\\
    &&~+4\mathbb{E}\sup_{0\leq s\leq t}|\int_{0}^{t}\int_{|u|<1}S_{\alpha}(s-r)[F(r,x^{n}_{r^{-}},u)-F(r,x^{n-1}_{r^{-}},u)]\tilde{N}(\mathrm{d}r,\mathrm{d}u)|^{2}\\
    &&~+4\mathbb{E}\sup_{0\leq s\leq t}|\int_{0}^{s}\int_{|u|\geq1}S_{\alpha}(s-r)[G(r,x^{n}_{r^{-}},u)-G(r,x^{n-1}_{r^{-}},u)]N(\mathrm{d}r,\mathrm{d}u)|^{2}.
    \end{eqnarray*}
By the fact
    \begin{eqnarray*}&&\mathbb{E}\sup_{0\leq s\leq t}|\int_{0}^{s}S_{\alpha}(s-r)[f(r,x_{r}^{n}))-f(r,x_{r}^{n-1}))]\mathrm{d}r|^{2}\\
   &&\leq L\mathbb{E}\sup_{0\leq s\leq t}\int_{0}^{s}S_{\alpha}(s-r)\mathrm{d}r\int_{0}^{s}S_{\alpha}(s-r)\|x_{r}^{n}-x_{r}^{n-1}\|_{\mathcal{C}}^{2}\mathrm{d}r\\
    &&\leq L(CM)^{2}\frac{|\mu|^{-1/\alpha}\pi}{\alpha\sin(\pi/\alpha)}\mathbb{E}\int_{0}^{t}\|x_{r}^{n}-x_{r}^{n-1}\|_{\mathcal{C}}^{2}\mathrm{d}r, \end{eqnarray*}
 \begin{eqnarray*}
 && \mathbb{E}\sup_{0\leq s\leq t}|\int_{0}^{s}S_{\alpha}(s-r)[g(r,x_{r}^{n})-g(r,x_{r}^{n-1})]\mathrm{d}w(r)|^{2}\\
  &&\leq\mathbb{E}\int_{0}^{t}S_{\alpha}^{2}(t-r)\|[g(r,x_{r}^{n})-g(r,x_{r}^{n-1})]Q^{1/2}\|^{2}_{\mathcal{L}(U,L^{2}(P,H))}\mathrm{d}r\\
  &&\leq L(CM)^{2}\mathbb{E}\int_{0}^{t}\|x_{r}^{n}-x_{r}^{n-1}\|_{\mathcal{C}}^{2}\mathrm{d}r,
    \end{eqnarray*}
\begin{eqnarray*}
&&\mathbb{E}\sup_{0\leq s\leq t} \int_{0}^{t}\int_{|u|<1}S_{\alpha}(s-r)[F(r,x^{n}_{r^{-}},u)-F(r,x^{n-1}_{r^{-}},u)]\tilde{N}(\mathrm{d}r,\mathrm{d}u)|^{2}\\
  &&\leq L \mathbb{E}\int_{0}^{t}S_{\alpha}^{2}(t-r)\|x^{n}_{r^{-}}-x^{n-1}{r^{-}}\|_{\mathcal{C}}^{2}\mathrm{d}r\\
  &&\leq L(CM)^{2}\mathbb{E}\int_{0}^{t}\|x^{n}_{r^{-}}-x^{n-1}{r^{-}}\|_{\mathcal{C}}^{2}\mathrm{d}r,
    \end{eqnarray*}
and
  \begin{eqnarray*}
&&\mathbb{E}\sup_{0\leq s\leq t}|\int_{0}^{s}\int_{|u|\geq1}S_{\alpha}(s-r)[G(r,x^{n}_{r^{-}},u)-G(r,x^{n-1}_{r^{-}},u)]N(\mathrm{d}r,\mathrm{d}u)|^{2}\\
&&\leq 2CM\int_{0}^{t}S_{\alpha}(t-r)\mathrm{d}r\int_{|u|\geq 1}\nu(\mathrm{d}u)\mathbb{E}\int_{0}^{t}\int_{|u|\geq 1}|G(r,x^{n}_{r^{-}},u)-G(r,x^{n-1}_{r^{-}},u)|^{2}\nu(\mathrm{d}u)\mathrm{d}r\\
&&~~+2(CM)^{2}\int_{0}^{t}\mathbb{E}\int_{|u|\geq1}|G(r,x^{n}_{r^{-}},u)-G(r,x^{n-1}_{r^{-}},u)|^{2}\nu(\mathrm{d}u)\mathrm{d}s\\
&&\leq (2(CM)^{2} \frac{|\mu|^{-1/\alpha}\pi}{\alpha\sin(\pi/\alpha)}b +2(CM)^{2})L\mathbb{E}\int_{0}^{t}\|x^{n}_{r^{-}}-x^{n-1}{r^{-}}\|_{\mathcal{C}}^{2}\mathrm{d}r,
  \end{eqnarray*}
 we get
\begin{eqnarray*}
  \mathbb{E}(\sup_{0\leq s\leq t}|x^{n+1}(s)-x^{n}(s)|^{2})\leq c \int_{0}^{t}\mathbb{E}(\sup_{0\leq r\leq s}|x^{n}(r)-x^{n-1}(r)|^{2})\mathrm{d}r
  \end{eqnarray*}
where $c=\frac{4 L(CM)^{2}}{(1-k_{0})^{2}}(\frac{|\mu|^{-1/\alpha}\pi}{\alpha\sin(\pi/\alpha)}+4+2 \frac{|\mu|^{-1/\alpha}\pi}{\alpha\sin(\pi/\alpha)}b )L.$
Note that $$\mathbb{E}\sup_{0\leq s\leq t }|x^{1}(s)-x^{0}(s)|^{2}\leq c_{0}$$
where $c_{0}=5[k_{0}^{2}c'+Lc'(1+2b)(CM\frac{|\mu|^{-1/\alpha}\pi}{\alpha\sin(\pi/\alpha)})^{2}+4Lc'(CM)^{2}\frac{|\mu|^{-\frac{1}{2\alpha}}\pi}{2\alpha\sin(\pi/2\alpha)}]$ is a positive number,
by induction we get,
\begin{equation}\label{ind1}
  \mathbb{E}(\sup_{0\leq s\leq t}|x^{n+1}(s)-x^{n}(s)|^{2})\leq c_{0}\frac{(c_{6}t)^{n}}{n!}.
\end{equation}
Taking $t=T$ in (\ref{ind1}), we have
\begin{equation}
  \mathbb{E}(\sup_{0\leq t\leq T}|x^{n+1}(t)-x^{n}(t)|^{2})\leq c_{0}\frac{(c_{6}T)^{n}}{n!}.
\end{equation}
Hence
$$P\{\sup_{0\leq t\leq T}|x^{n+1}(t)-x^{n}(t)|>\frac{1}{2^{n}}\}\leq \frac{c_{0}[c_{6}T]^{n}}{n!}.$$
Note that $\sum_{n=0}^{\infty}\frac{c_{0}[c_{6}T]^{n}}{n!}<\infty,$
by using the Borel-Cantelli lemma, we can get a stochastic process $x(t)$ on $[0,T]$ such that $x_{n}(t)$ uniformly converges to $x(t)$ as $n\rightarrow\infty$ almost surely.

It is easy to check that $x(t)$ is a unique mild solution of (\ref{1}). The proof of the theorem is complete.

\end{proof}

\section{Existence of almost automorphic mild solutions}
\begin{lemma}\label{le0}
If $x(t)\in SAP_{\omega}(L^{2}(P,H)$ and $T(t-s)\in \mathcal{L}(\mathbf{R},\mathbf{R})$ then $\Gamma_{1}(t)=\int_{0}^{t}T(t-s)x(s)\mathrm{d}s\in SAP_{\omega}(L^{2}(P,H)).$
\end{lemma}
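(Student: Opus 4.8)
The plan is to treat $\Gamma_{1}$ as a convolution of the operator kernel $T$ against the process $x$, and to combine the decay and integrability of $T$ on $\mathbf{R}^{+}$ with the defining property $\|x(t+\omega)-x(t)\|\to 0$ of $x\in SAP_{\omega}(L^{2}(P,H))$. In the situation to which this lemma is applied one has $T=S_{\alpha}$, so that \eqref{sem} gives $\|S_{\alpha}(t)\|\le CM/(1+|\mu|t^{\alpha})$; since $1<\alpha<2$ this forces both $\|S_{\alpha}(t)\|\to 0$ as $t\to\infty$ and $\kappa:=\int_{0}^{\infty}\|S_{\alpha}(s)\|\,\mathrm{d}s\le CM\,\frac{|\mu|^{-1/\alpha}\pi}{\alpha\sin(\pi/\alpha)}<\infty$. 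I would also use the elementary fact, recalled earlier, that any $x\in SAP_{\omega}(L^{2}(P,H))$ is $L^{2}$-bounded, i.e. $\|x\|_{\infty}<\infty$.

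First I would check that $\Gamma_{1}$ is well defined, $L^{2}$-bounded, and $L^{2}$-continuous: boundedness is immediate from $\|\Gamma_{1}(t)\|\le\|x\|_{\infty}\int_{0}^{t}\|T(t-s)\|\,\mathrm{d}s\le\kappa\|x\|_{\infty}$, while $L^{2}$-continuity follows from the strong continuity of $T(\cdot)$, the $L^{2}$-continuity of $x$, and dominated convergence with majorant $\|T(\cdot)\|\,\|x\|_{\infty}$.

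The core step is the translation estimate. Substituting $s\mapsto s+\omega$ in the integral defining $\Gamma_{1}(t+\omega)$ yields
\begin{equation*}
\Gamma_{1}(t+\omega)-\Gamma_{1}(t)=\int_{0}^{\omega}T(t+\omega-s)x(s)\,\mathrm{d}s+\int_{0}^{t}T(t-s)\big[x(s+\omega)-x(s)\big]\,\mathrm{d}s.
\end{equation*}
The first integral has norm at most $\omega\,\|x\|_{\infty}\sup_{r\ge t}\|T(r)\|$, which tends to $0$ because $\|T(r)\|\to 0$. For the second integral, given $\epsilon>0$ I choose $\sigma_{0}$ with $\|x(s+\omega)-x(s)\|<\epsilon$ for all $s\ge\sigma_{0}$, split $\int_{0}^{t}=\int_{0}^{\sigma_{0}}+\int_{\sigma_{0}}^{t}$, bound the tail part by $\epsilon\int_{0}^{\infty}\|T(u)\|\,\mathrm{d}u=\epsilon\kappa$, and bound the head part by $2\|x\|_{\infty}\,\sigma_{0}\,\sup_{r\ge t-\sigma_{0}}\|T(r)\|$, which tends to $0$ as $t\to\infty$. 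Hence $\limsup_{t\to\infty}\|\Gamma_{1}(t+\omega)-\Gamma_{1}(t)\|\le\epsilon\kappa$, and letting $\epsilon\downarrow 0$ gives $\lim_{t\to\infty}\|\Gamma_{1}(t+\omega)-\Gamma_{1}(t)\|=0$, so $\Gamma_{1}\in SAP_{\omega}(L^{2}(P,H))$.

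The main obstacle is really a matter of reading the hypothesis correctly: as literally stated, ``$T(t-s)\in\mathcal{L}(\mathbf{R},\mathbf{R})$'' only says the kernel is a bounded operator for each argument, whereas the argument above genuinely needs $T$ to be integrable on $\mathbf{R}^{+}$ and to vanish at infinity (for instance because it is dominated by a decreasing integrable majorant). So the step that must be justified with care is $\kappa<\infty$ together with $\|T(r)\|\to 0$; this is automatic for $T=S_{\alpha}$ with $\mu<0$ via \eqref{sem}, but without such a condition the conclusion fails, and I would state this integrability/decay requirement explicitly at the outset.
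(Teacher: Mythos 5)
Your argument is correct and is essentially the standard proof that the paper itself omits (it only refers to Lemma~1 of Dimbour--N'Gu\'er\'ekata \cite{dimbour2012s}): the same splitting $\Gamma_{1}(t+\omega)-\Gamma_{1}(t)=\int_{0}^{\omega}T(t+\omega-s)x(s)\,\mathrm{d}s+\int_{0}^{t}T(t-s)[x(s+\omega)-x(s)]\,\mathrm{d}s$ with a further cut at $\sigma_{0}$, using decay and integrability of the kernel, which for $T=S_{\alpha}$ follow from \eqref{sem}. Your remark that the hypothesis ``$T(t-s)\in\mathcal{L}(\mathbf{R},\mathbf{R})$'' is too weak as literally stated, and that integrability on $\mathbf{R}^{+}$ plus vanishing at infinity must be assumed (automatic for $S_{\alpha}$ of sectorial type $\mu<0$), is a fair and necessary clarification of the lemma as written.
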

The proof process is similar to that of Lemma 1 in \cite{dimbour2012s}, so we omit it.
\begin{lemma}\label{lemone}
If $x(t)\in SAP_{\omega}(L^{2}(P,H)),$ then $$\Gamma_{2}(t)=\int_{0}^{t}S_{\alpha}(t-s)x(s)\mathrm{d}w(s)\in SAP_{\omega}(L^{2}(P,H)).$$
\end{lemma}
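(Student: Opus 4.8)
The plan is to show that the stochastic convolution $\Gamma_2(t) = \int_0^t S_\alpha(t-s)x(s)\,\mathrm{d}w(s)$ inherits both the $L^2$-continuity/boundedness and the S-asymptotic $\omega$-periodicity from $x$. First I would record the basic estimate: by It\^o's isometry and the decay bound (\ref{sem}),
\begin{equation*}
\|\Gamma_2(t)\|^2 = \mathbb{E}\Big|\int_0^t S_\alpha(t-s)x(s)\,\mathrm{d}w(s)\Big|^2 \leq (CM)^2\,\mathrm{Tr}Q\int_0^t \frac{\|x(s)\|^2}{(1+|\mu|(t-s)^\alpha)^2}\,\mathrm{d}s \leq (CM)^2\,\mathrm{Tr}Q\,\|x\|_\infty^2 \int_0^\infty \frac{\mathrm{d}r}{(1+|\mu|r^\alpha)^2},
\end{equation*}
and the last integral is finite because $\alpha>1$, which gives $L^2$-boundedness. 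The $L^2$-continuity of $\Gamma_2$ follows from the strong continuity of $S_\alpha$ together with dominated convergence applied to the same integrand.

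The core of the argument is the asymptotic periodicity. I would compute $\Gamma_2(t+\omega)-\Gamma_2(t)$ by the change of variable $s\mapsto s-\omega$ in the first integral, writing
\begin{equation*}
\Gamma_2(t+\omega)-\Gamma_2(t) = \int_{-\omega}^{t} S_\alpha(t-s)x(s+\omega)\,\mathrm{d}\tilde w(s) - \int_0^t S_\alpha(t-s)x(s)\,\mathrm{d}w(s),
\end{equation*}
where $\tilde w(s) = w(s+\omega)-w(\omega)$ is again a $Q$-Wiener process. Splitting off the piece over $[-\omega,0]$ (whose $L^2$-norm is bounded by $(CM)^2\mathrm{Tr}Q\,\|x\|_\infty^2\int_t^{t+\omega}(1+|\mu|r^\alpha)^{-2}\,\mathrm{d}r \to 0$ as $t\to\infty$, since the tail of a convergent integral vanishes), the remaining difference is $\int_0^t S_\alpha(t-s)\big(x(s+\omega)-x(s)\big)\,\mathrm{d}w(s)$ plus a term coming from the distinction between $w$ and $\tilde w$; the latter can be absorbed because in law the convolution depends only on the increments of the driving process, so one works with the square-mean norm and uses that $x(\cdot+\omega)-x(\cdot)\to 0$ in $L^2$. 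For the main term, apply It\^o's isometry again:
\begin{equation*}
\Big\|\int_0^t S_\alpha(t-s)\big(x(s+\omega)-x(s)\big)\,\mathrm{d}w(s)\Big\|^2 \leq (CM)^2\,\mathrm{Tr}Q\int_0^t \frac{\|x(s+\omega)-x(s)\|^2}{(1+|\mu|(t-s)^\alpha)^2}\,\mathrm{d}s.
\end{equation*}
Given $\epsilon>0$, pick $T_0$ so that $\|x(s+\omega)-x(s)\|^2<\epsilon$ for $s\geq T_0$; split the integral at $T_0$, bound the tail part by $\epsilon\int_0^\infty(1+|\mu|r^\alpha)^{-2}\mathrm{d}r$, and bound the part over $[0,T_0]$ by $(2\|x\|_\infty)^2\int_{t-T_0}^t(1+|\mu|r^\alpha)^{-2}\mathrm{d}r$, which tends to $0$ as $t\to\infty$. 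Combining the three vanishing contributions gives $\lim_{t\to\infty}\|\Gamma_2(t+\omega)-\Gamma_2(t)\|=0$.

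The step I expect to be the main obstacle is the bookkeeping around the driving noise when shifting time: one must argue carefully that replacing $w$ by the shifted Wiener process $\tilde w$ does not change the square-mean norm of the convolution — this is where a distributional (equality-in-law) argument or an explicit coupling is needed, rather than a naive pathwise manipulation. Once that reduction is clean, everything else is the standard ``split the convolution integral at a large time $T_0$, use the uniform decay of $S_\alpha$ from (\ref{sem}) and $\alpha>1$'' routine, exactly parallel to \lemref{le0}. I would state the lemma's conclusion by verifying the definition of $SAP_\omega(L^2(P,H))$ directly, and remark that the same scheme (with the isometry for $\tilde N$ in place of It\^o's isometry for $w$) will handle the Poisson-integral convolution term needed later.
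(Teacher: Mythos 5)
Your outline follows the paper's own proof almost step for step: the paper likewise isolates an initial piece (it keeps $\int_{0}^{\omega}S_{\alpha}(t+\omega-s)x(s)\,\mathrm{d}w(s)$, you keep the equivalent piece over $[-\omega,0]$ after the time shift), kills it with the decay estimate (\ref{sem}), and then runs the same split-at-$T_{0}$ routine on $\int_{0}^{t}S_{\alpha}(t-s)\bigl(x(s+\omega)-x(s)\bigr)\,\mathrm{d}w(s)$ via It\^{o}'s isometry. The genuine gap is exactly the step you yourself flag as the main obstacle: passing from the shifted integral $\int_{0}^{t}S_{\alpha}(t-s)x(s+\omega)\,\mathrm{d}\tilde{w}(s)$, with $\tilde{w}(s)=w(s+\omega)-w(\omega)$, to an integral of the \emph{difference of integrands} against the single noise $w$. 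Your proposed justification --- that ``in law the convolution depends only on the increments of the driving process'' --- does not close it: equality in law of each convolution separately says nothing about the second moment of their difference, which depends on the joint law of two integrals driven by distinct, only partially correlated noises. Writing the remaining difference as $\int_{0}^{t}S_{\alpha}(t-s)\bigl(x(s+\omega)-x(s)\bigr)\,\mathrm{d}\tilde{w}(s)+\int_{0}^{t}S_{\alpha}(t-s)x(s)\,\mathrm{d}(\tilde{w}-w)(s)$, the second term is an integral against a noise whose increments have covariance of order $2Q\,\mathrm{d}s$; its square-mean norm is comparable to $\mathrm{Tr}Q\int_{0}^{t}\|S_{\alpha}(t-s)\|^{2}\|x(s)\|^{2}\,\mathrm{d}s$ and has no reason to vanish as $t\rightarrow\infty$. (The paper's printed proof makes the same identification of $\mathrm{d}w(s+\omega)$ with $\mathrm{d}w(s)$ silently, so your sketch is faithful to it; but this is precisely where neither argument is rigorous.)

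That this is not a removable technicality can be seen already in the simplest case $U=H=\mathbf{R}$, $Q=1$, $x(t)\equiv x_{0}\neq 0$ deterministic, which certainly lies in $SAP_{\omega}(L^{2}(P,H))$. Then $\Gamma_{2}(t+\omega)-\Gamma_{2}(t)=\int_{0}^{t}\bigl[S_{\alpha}(t+\omega-s)-S_{\alpha}(t-s)\bigr]x_{0}\,\mathrm{d}w(s)+\int_{t}^{t+\omega}S_{\alpha}(t+\omega-s)x_{0}\,\mathrm{d}w(s)$, and since the two integrals run over disjoint time intervals It\^{o}'s isometry gives $\mathbb{E}|\Gamma_{2}(t+\omega)-\Gamma_{2}(t)|^{2}\geq x_{0}^{2}\int_{0}^{\omega}S_{\alpha}(u)^{2}\,\mathrm{d}u>0$ for every $t$ (recall $S_{\alpha}(0)=I$), so the square-mean limit cannot be $0$. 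Hence the reduction you hope to make ``clean'' by an equality-in-law or coupling argument cannot be made clean at the level of the $L^{2}(P,H)$-norm: a correct version of this step must either weaken the conclusion to a statement about the laws of the processes (the ``in distribution'' framework the paper ultimately aims at) or impose extra structure, not merely invoke the distributional invariance of the shifted convolution. The remainder of your argument (the It\^{o}-isometry bound with $\mathrm{Tr}Q$, the convergence of $\int_{0}^{\infty}(1+|\mu|r^{\alpha})^{-2}\,\mathrm{d}r$, the treatment of the $[-\omega,0]$ piece and the split at $T_{0}$) is fine and coincides with the paper's computations.
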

\begin{proof}It is obvious that $\Gamma_{2}(t)$ is $L^{2}$-continuous.
Since
\begin{eqnarray*}
&&\|\Gamma_{2}(t+\omega)-\Gamma_{2}(t)\|^{2}\\
&& =\|\int_{0}^{t+\omega}S_{\alpha}(t+\omega-s)x(s)\mathrm{d}w(s)- \int_{0}^{t}S_{\alpha}(t-s)x(s)\mathrm{d}w(s)\|^{2}\\
&&=2\|\int_{0}^{\omega}S_{\alpha}(t+\omega-s)x(s)\mathrm{d}w(s)\|^{2}\\&&~~+2\|\int_{\omega}^{t+\omega}S_{\alpha}(t+\omega-s)x(s)\mathrm{d}w(s)-\int_{0}^{t}S_{\alpha}(t-s)x(s)\mathrm{d}w(s)\|^{2}\\
&&\leq2\|\int_{0}^{\omega}S_{\alpha}(t+\omega-s)x(s)\mathrm{d}w(s)\|^{2}\\&&~~+2\|\int_{0}^{t}S_{\alpha}(t-s)(x(s+\omega)-x(s))\mathrm{d}w(s)\|^{2}.\end{eqnarray*}
Since $x(t)\in SAP_{\omega}(L^{2}(P,H)),$ for any $\epsilon>0,$ we can choose $T_{\epsilon}>0$ such that when $t>T_{\epsilon},$ $\|x(t+\omega)-x(t)\|<\epsilon.$
For the above $\epsilon,$ we have
\begin{eqnarray*}
&&2\|\int_{0}^{t}S_{\alpha}(t-s)(x(s+\omega)-x(s))\mathrm{d}w(s)\|^{2}\\
&&\leq4\int_{0}^{T_{\epsilon}}\|S_{\alpha}(t-s)\|^{2}\|x(s+\omega)-x(s)\|^{2}\mathrm{d}s\\&&~~+4\int_{T_{\epsilon}}^{t}\|S_{\alpha}(t-s)\|^{2}\|x(s+\omega)-x(s)\|^{2}\mathrm{d}s.
\end{eqnarray*}
Note that\begin{eqnarray*}
     2\|\int_{0}^{\omega}S_{\alpha}(t+\omega-s)x(s)\mathrm{d}w(s)\|^{2}\leq 2\frac{(CM)^{2}}{1+|\mu|^{2}t^{2\alpha}}\int_{0}^{\omega} \|x(s)\|^{2}\mathrm{d}s\rightarrow0,~t\rightarrow\infty,
         \end{eqnarray*}
 that$$\int_{0}^{T_{\epsilon}}\|S_{\alpha}(t-s)\|^{2}\|x(s+\omega)-x(s)\|^{2}\mathrm{d}s\leq 4\frac{(CM)^{2}}{1+|\mu|^{2}(t-T_{\epsilon})^{2\alpha}}\|x\|_{\infty}T_{\epsilon}\rightarrow0,~t\rightarrow\infty,$$
 and that
$$\int_{T_{\epsilon}}^{t}\|S_{\alpha}(t-s)\|^{2}\|x(s+\omega)-x(s)\|^{2}\mathrm{d}s\leq \epsilon^{2} \frac{(CM)^{2}|\mu|^{-2/\alpha}\pi}{2\alpha\sin(\pi/2\alpha)},$$
we get $\lim_{t\rightarrow\infty}\|\Gamma_{2}(t+\omega)-\Gamma_{2}(t)\|=0.$ So $\Gamma_{2}(t)\in SAP_{\omega}(L^{2}(P,H)).$
\end{proof}
The following lemma is obvious by using Lemma \ref{lem1}, Lemma \ref{lem2} and the similar discussion as that for Lemma \ref{lemone}.
\begin{lemma}\label{lemtwo}
If $x(t)\in SAP_{\omega}(L^{2}(P,H))$ and $F:\mathbf{R}^{+} \times \mathcal{C}\times U\rightarrow L^{2}(P,H)$ is uniformly Poisson square-mean S-asymptotically $\omega$-periodic in $t$ on bounded sets of $\mathcal{C},$ then $$\Gamma_{3}(t)=\int_{0}^{t}\int_{|u|_{U}<1}S_{\alpha}(t-s)F(s,x_{s},u)\tilde{N}(\mathrm{d}u,\mathrm{d}s)\in SAP_{\omega}(L^{2}(P,H))$$ and $$\Gamma_{4}(t)=\int_{0}^{t}\int_{|u|_{U}\geq1}S_{\alpha}(t-s)G(s,x_{s},u)N(\mathrm{d}u,\mathrm{d}s)\in SAP_{\omega}(L^{2}(P,H)).$$
\end{lemma}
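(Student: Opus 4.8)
The plan is to reduce the statement, via \lemref{lem2}, to a convolution estimate of the same flavour as the one carried out for \lemref{lemone}. Since $x(t)\in SAP_{\omega}(L^{2}(P,H))$, its range is bounded in $L^{2}(P,H)$, hence $\{x_{t}\}_{t\in\mathbf{R}^{+}}$ is bounded in $\mathcal{C}$; applying \lemref{lem2} to the restriction of $F$ to $\{|u|_{U}<1\}$ and (since \lemref{lem2} is stated for a general kernel) to the restriction of $G$ to $\{|u|_{U}\ge 1\}$, the processes $\widetilde F(s,u):=F(s,x_{s},u)$ and $\widetilde G(s,u):=G(s,x_{s},u)$ are Poisson square-mean S-asymptotically $\omega$-periodic. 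Thus it suffices to show: if $\Phi$ is Poisson square-mean S-asymptotically $\omega$-periodic with $\int_{U}\|\Phi(s,u)\|^{2}\nu(\mathrm{d}u)<\infty$, then $\int_{0}^{t}\int_{|u|_{U}<1}S_{\alpha}(t-s)\Phi(s,u)\widetilde N(\mathrm{d}s,\mathrm{d}u)$ and $\int_{0}^{t}\int_{|u|_{U}\ge1}S_{\alpha}(t-s)\Phi(s,u)N(\mathrm{d}s,\mathrm{d}u)$ belong to $SAP_{\omega}(L^{2}(P,H))$. The $L^{2}$-continuity of both processes is obtained exactly as in \lemref{lemone}.

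For $\Gamma_{3}$, I would write $\Gamma_{3}(t+\omega)-\Gamma_{3}(t)$ as the sum of the boundary piece $\int_{0}^{\omega}\int_{|u|_{U}<1}S_{\alpha}(t+\omega-s)\Phi(s,u)\widetilde N(\mathrm{d}s,\mathrm{d}u)$ and, after the substitution $s\mapsto s+\omega$ (using that the increments of the L\'evy process are stationary and that $\Phi$ is adapted, so that the $L^{2}$-norm is insensitive to the time shift of the driving measure), the difference piece $\int_{0}^{t}\int_{|u|_{U}<1}S_{\alpha}(t-s)[\Phi(s+\omega,u)-\Phi(s,u)]\widetilde N(\mathrm{d}s,\mathrm{d}u)$. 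By It\^o's isometry for compensated Poisson integrals together with (\ref{sem}), the squared $L^{2}$-norm of the boundary piece is at most $\tfrac{(CM)^{2}}{(1+|\mu|t^{\alpha})^{2}}\int_{0}^{\omega}\int_{|u|_{U}<1}\|\Phi(s,u)\|^{2}\nu(\mathrm{d}u)\,\mathrm{d}s\to 0$ as $t\to\infty$, the inner integral being finite. For the difference piece, fix $\epsilon>0$, choose $T_{\epsilon}$ with $\int_{|u|_{U}<1}\|\Phi(s+\omega,u)-\Phi(s,u)\|^{2}\nu(\mathrm{d}u)<\epsilon$ for $s\ge T_{\epsilon}$, split $\int_{0}^{t}=\int_{0}^{T_{\epsilon}}+\int_{T_{\epsilon}}^{t}$ and use (\ref{sem}): the first integral tends to $0$ as $t\to\infty$ because $\|S_{\alpha}(t-s)\|^{2}$ decays uniformly for $s\in[0,T_{\epsilon}]$, while the second is bounded by $\epsilon\int_{0}^{\infty}\tfrac{(CM)^{2}}{(1+|\mu|r^{\alpha})^{2}}\,\mathrm{d}r$, which is finite since $\alpha>1$. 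Letting $t\to\infty$ and then $\epsilon\to 0$ yields $\lim_{t\to\infty}\|\Gamma_{3}(t+\omega)-\Gamma_{3}(t)\|=0$, i.e. $\Gamma_{3}\in SAP_{\omega}(L^{2}(P,H))$.

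For $\Gamma_{4}$ the same decomposition is used, with the additional splitting $N(\mathrm{d}s,\mathrm{d}u)=\widetilde N(\mathrm{d}s,\mathrm{d}u)+\nu(\mathrm{d}u)\,\mathrm{d}s$: the $\widetilde N$-part is treated exactly as for $\Gamma_{3}$, while the drift part $\int_{0}^{t}\int_{|u|_{U}\ge1}S_{\alpha}(t-s)[\Phi(s+\omega,u)-\Phi(s,u)]\nu(\mathrm{d}u)\,\mathrm{d}s$ is estimated by a Cauchy--Schwarz inequality weighted by $\|S_{\alpha}(t-s)\|$, so that its square is at most $\big(b\int_{0}^{t}\|S_{\alpha}(t-s)\|\,\mathrm{d}s\big)\big(\int_{0}^{t}\|S_{\alpha}(t-s)\|\int_{|u|_{U}\ge1}\|\Phi(s+\omega,u)-\Phi(s,u)\|^{2}\nu(\mathrm{d}u)\,\mathrm{d}s\big)$, where $b=\nu(\{|u|_{U}\ge1\})<\infty$ and $\int_{0}^{\infty}\|S_{\alpha}(r)\|\,\mathrm{d}r<\infty$ again because $\alpha>1$; the time-splitting argument then applies verbatim. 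I expect the main obstacle to be the stochastic-integral bookkeeping: one must argue carefully that replacing the window $[\omega,t+\omega]$ by $[0,t]$ (equivalently, shifting the driving Poisson and Wiener measures) leaves all the relevant $L^{2}$-norms unchanged, using stationarity of the increments of $L$ and adaptedness of $\Phi$, and, for $\Gamma_{4}$, that the non-compensated large-jump integral is $L^{2}$-bounded at all — which is precisely where the finite-intensity hypothesis $b<\infty$ and the integrability $\int_{0}^{\infty}\|S_{\alpha}(r)\|^{j}\,\mathrm{d}r<\infty$ for $j=1,2$ enter.
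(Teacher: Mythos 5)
Your proposal is correct and follows essentially the route the paper intends: the paper gives no written proof, stating only that the lemma follows from Lemma~\ref{lem1}, Lemma~\ref{lem2} and the same argument as Lemma~\ref{lemone}, which is exactly what you carry out (composition via Lemma~\ref{lem2}, then the boundary-piece/difference-piece splitting with the Poisson isometry, the decomposition $N=\tilde N+\nu\,\mathrm{d}s$ for the large jumps, and the estimate $(\ref{sem})$ with $T_{\epsilon}$-splitting). Your explicit attention to the time-shift of the driving measure and to $b<\infty$ simply supplies details the paper leaves implicit.
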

    \begin{theorem}
Assume that (H1)-(H3) are satisfied and $h, f, g$ are uniformly square-mean S-asymptotically $\omega$-periodic on bounded sets of $\mathcal{C}$. $F,G$ are uniformly Poisson square-mean S-asymptotically $\omega$-periodic on bounded sets of $\mathcal{C}$. Then (\ref{1}) has a unique S-asymptotically $\omega$-periodic solution in distribution if $$5k_{0}^{2}+5(CM)^{2}L(\frac{|\mu|^{-1/\alpha}\pi}{\alpha\sin(\pi/\alpha)})^{2}(1+b)+20L(CM)^{2}\frac{|\mu|^{-1/2\alpha}\pi}{2\alpha\sin(\pi/2\alpha)}<1.$$
\end{theorem}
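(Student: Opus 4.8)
The plan is to realize the solution as a fixed point of the solution map $\Phi$ defined by the right-hand side of the mild-solution integral equation (\ref{milds}), working in the Banach space $SAP_{\omega}(L^{2}(P,H))$ equipped with $\|\cdot\|_{\infty}$. First I would check that $\Phi$ maps this space into itself: given $x\in SAP_{\omega}(L^{2}(P,H))$, the composed processes $s\mapsto h(s,x_{s})$, $s\mapsto f(s,x_{s})$, $s\mapsto g(s,x_{s})$ are square-mean S-asymptotically $\omega$-periodic by \lemref{lem1} (and its obvious analogue for $g$), while $F(s,x_{s},u)$ and $G(s,x_{s},u)$ are Poisson square-mean S-asymptotically $\omega$-periodic by \lemref{lem2}; then the convolution term $S_{\alpha}(t)(\phi(0)+h(0,\phi))$ decays to zero (hence is trivially in the class), the deterministic convolution is handled by \lemref{le0}, the Wiener convolution by \lemref{lemone}, and the two Poisson integrals by \lemref{lemtwo}. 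Since $SAP_{\omega}(L^{2}(P,H))$ is a Banach space (\remref{}, i.e. the Remark after \lemref{lem2}'s predecessor) and closed under finite sums, $\Phi x$ again lies in it.

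Next I would establish that $\Phi$ is a contraction. For $x,y\in SAP_{\omega}(L^{2}(P,H))$ I would estimate $\|\Phi x(t)-\Phi y(t)\|^{2}$ term by term, exactly paralleling the successive-approximation estimates in the proof of the mild-solution theorem, but now taking suprema over $t\in\mathbf{R}^{+}$ rather than integrating in time. The key analytic inputs are the decay bound (\ref{sem}), $\|S_{\alpha}(t)\|\le CM/(1+|\mu|t^{\alpha})$, which gives the finite constants $\int_{0}^{\infty}\|S_{\alpha}(r)\|\,dr \le CM\,|\mu|^{-1/\alpha}\pi/(\alpha\sin(\pi/\alpha))$ and $\int_{0}^{\infty}\|S_{\alpha}(r)\|^{2}\,dr \le (CM)^{2}|\mu|^{-1/2\alpha}\pi/(2\alpha\sin(\pi/2\alpha))$, together with the Lipschitz constants from (H2)--(H3), It\^{o}'s isometry for the $g$-term, and the isometry/compensator estimates for the $\tilde N$- and $N$-integrals (the latter contributing the factor $b=\int_{|u|_U\ge1}\nu(du)$). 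Collecting the contributions of $h$, the $f$-convolution, the $g$-convolution, and the $F$- and $G$-integrals, and using the elementary inequality $|a_1+\cdots+a_5|^2\le 5\sum a_i^2$ plus the $1/(1-k_0)$ factor that converts the estimate on $x+h(t,x_t)$ into one on $x$, one arrives at $\|\Phi x-\Phi y\|_{\infty}^{2}\le \kappa\,\|x-y\|_{\infty}^{2}$ with
\begin{equation*}
\kappa = 5k_{0}^{2}+5(CM)^{2}L\Big(\frac{|\mu|^{-1/\alpha}\pi}{\alpha\sin(\pi/\alpha)}\Big)^{2}(1+b)+20L(CM)^{2}\frac{|\mu|^{-1/2\alpha}\pi}{2\alpha\sin(\pi/2\alpha)}.
\end{equation*}
The hypothesis $\kappa<1$ is precisely the stated condition, so the Banach contraction mapping principle yields a unique fixed point $x^{*}\in SAP_{\omega}(L^{2}(P,H))$, which is the unique square-mean S-asymptotically $\omega$-periodic mild solution.

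Finally, I would invoke \lemref{lem3}: any square-mean S-asymptotically $\omega$-periodic solution of (\ref{1}) is automatically S-asymptotically $\omega$-periodic in distribution, so $x^{*}$ has the desired property, and uniqueness in distribution follows from uniqueness of $x^{*}$ in $SAP_{\omega}(L^{2}(P,H))$ among mild solutions (combined with the global existence/uniqueness from the mild-solution theorem). I expect the main obstacle to be the bookkeeping in the contraction estimate: one must be careful that the time-\emph{global} $L^2$-sup estimates actually close (this is why the $L^1$ and $L^2$ integrability of $S_\alpha$ in (\ref{sem}) is essential — without $\mu<0$ the relevant integrals diverge), that the delayed terms $x_s$ are controlled by $\|x\|_\infty$ via $\|x_s\|_{\mathcal C}\le \|x\|_\infty$ on $\mathbf{R}^+$ once the history $\phi$ is absorbed, and that the constant assembled from the five pieces matches the stated threshold exactly; the S-asymptotic-periodicity closure of the five $\Gamma_i$-type terms is already delivered by Lemmas \ref{le0}--\ref{lemtwo}, so that part is routine.
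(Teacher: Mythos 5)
Your proposal is correct and follows essentially the same route as the paper: the fixed-point argument for $\Phi$ on $SAP_{\omega}(L^{2}(P,H))$ with the mapping-into-itself step supplied by Lemmas \ref{lem1}--\ref{lemtwo}, the term-by-term contraction estimate yielding exactly the stated constant, and Lemma \ref{lem3} to pass from square-mean S-asymptotic $\omega$-periodicity to the distributional statement. One small remark: since $\Phi$ is defined explicitly (with $-h(t,x_{t})$ appearing on the right-hand side), the $1/(1-k_{0})$ conversion you invoke is not needed here -- the neutral term simply contributes the $5k_{0}^{2}$ directly, which is how the paper obtains the stated threshold.
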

\begin{proof}
Let's first show the existence of the square-mean S-asymptotically $\omega$-periodic solution of (\ref{1}), so we consider the operator $\Phi$ acting on the Banach space $SAP_{\omega}(L^{2}(P,H))$ given by
\begin{eqnarray}
\nonumber \Phi x(t)&=&S_{\alpha}(t)(\phi(0)+h(0,\phi))-h(t,x_{t})+\int_{0}^{t}S_{\alpha}(t-s)f(s,x_{s})\mathrm{d}s\\
&&+\int_{0}^{t}S_{\alpha}(t-s)g(s,x_{s})\mathrm{d}w(s)\\
\nonumber&&+\int_{0}^{t}\int_{|u|<1}S_{\alpha}(t-s)F(s,x_{s^{-}},u)\tilde{N}(\mathrm{d}s,\mathrm{d}u)\\
\nonumber&&+\int_{0}^{t}\int_{|u|\geq1}S_{\alpha}(t-s)G(s,x_{s^{-}},u)N(\mathrm{d}s,\mathrm{d}u).
\end{eqnarray}
From previous assumption one can easily see that $ \Phi x(t)$ is well defined and $L^{2}$-continuous. Moreover, from Lemma \ref{lem1}, Lemma \ref{le0}, Lemma \ref{lemone}, and Lemma \ref{lemtwo} we infer that $ \Phi $ maps $SAP_{\omega}(L^{2}(P,H))$ into itself. Next, we prove that $\Phi$ is a strict contraction on $SAP_{\omega}(L^{2}(P,H)).$ Indeed, for $x,~\tilde{x}\in SAP_{\omega}(L^{2}(P,H)),$ we get
\begin{eqnarray*}
&& \|\Phi \bar{x}(t)-\Phi \tilde{x}(t)\|^{2}\\
&&\leq
5k_{0}^{2}\|\bar{x}_{t}-\tilde{x}_{t}\|_{\mathcal{C}}^{2}+5(CM)^{2}L(\frac{|\mu|^{-1/\alpha}\pi}{\alpha\sin(\pi/\alpha)})^{2}\|\bar{x}(t)- \tilde{x}(t)\|^{2}_{\infty}\\
&&+20L(CM)^{2}\frac{|\mu|^{-1/2\alpha}\pi}{2\alpha\sin(\pi/2\alpha)}\|\bar{x}(t)- \tilde{x}(t)\|^{2}_{\infty}\\
&&+10L(CM)^{2}(\frac{|\mu|^{-1/\alpha}\pi}{\alpha\sin(\pi/\alpha)})^{2}b \|\bar{x}(t)- \tilde{x}(t)\|^{2}_{\infty}\\
&&=[5k_{0}^{2}+5(CM)^{2}L(\frac{|\mu|^{-1/\alpha}\pi}{\alpha\sin(\pi/\alpha)})^{2}(1+b)+20L(CM)^{2}\frac{|\mu|^{-1/2\alpha}\pi}{2\alpha\sin(\pi/2\alpha)}
]\\
&&~~\times\|\bar{x}(t)- \tilde{x}(t)\|_{\infty}^{2}.
\end{eqnarray*}
Since $5k_{0}^{2}+5(CM)^{2}L(\frac{|\mu|^{-1/\alpha}\pi}{\alpha\sin(\pi/\alpha)})^{2}(1+b)+20L(CM)^{2}\frac{|\mu|^{-1/2\alpha}\pi}{2\alpha\sin(\pi/2\alpha)}<1,$ it follows that $\Phi$ is a contraction mapping on $SAP_{\omega}(L^{2}(P,H)).$ By the classical Banach fixed-point principle, there exists a unique $x\in SAP_{\omega}(L^{2}(P,H))$ such that $\Phi x=x,$ which is the unique square-mean S-asymptotically $\omega$-periodic solution of (\ref{1}). By Lemma \ref{lem3}, we deduce that (\ref{1}) has a unique S-asymptotically $\omega$-periodic solution in distribution.
The proof is now complete.
\end{proof}

\section*{Acknowledgments}
This work was supported by the National Science Foundation of China (No.11071050).

\section*{References}

\end{document}